\def\expandafter\normalsize\expandafter{%
    \normalsize%
    \setlength\abovedisplayskip{5pt}%
    \setlength\belowdisplayskip{5pt}%
    \setlength\abovedisplayshortskip{-3pt}%
    \setlength\belowdisplayshortskip{3pt}%
}
\newtheorem{theorem}{Theorem}[section] 
\newtheorem{corollary}{Corollary}[theorem] 
\newtheorem{lemma}[theorem]{Lemma} 
\newtheorem{proposition}[theorem]{Proposition}
\newtheorem{conjecture}[theorem]{Conjecture}
\theoremstyle{definition}
\newtheorem{definition}{Definition}
\newtheorem{example}{Example}
\theoremstyle{remark}
\newtheorem*{remark}{Remark} 
\DeclareMathOperator{\outdeg}{outdegree}
\DeclareMathOperator{\indeg}{indegree}
\title{Chip Firing on Directed $k$-ary Trees}
\author{Ryota Inagaki \and Tanya Khovanova \and Austin Luo}
\date{}
\begin{document}

\maketitle

\begin{abstract}
Chip-firing is a combinatorial game played on a graph in which we place and disperse chips on vertices until a stable state is reached. We study a chip-firing variant played on an infinite rooted directed $k$-ary tree, where we place $k^\ell$ chips on the root for some positive integer $\ell$, and we say a vertex $v$ can fire if it has at least $k$ chips. A vertex fires by dispersing one chip to each out-neighbor. Once every vertex has less than $k$ chips, we reach a stable configuration since no vertex can fire. We determine the exact number and properties of the possible stable configurations of chips in the setting where chips are distinguishable.
\end{abstract}

\section{Introduction}

The game of chip-firing depicts a dynamical system and is an important part in the field of structural combinatorics. Chip-firing originates from problems such as the abelian sandpile \cite{dhar1999abelian}, which states that when a stack of sand grains exceeds a certain height, the stack will distribute grains evenly to its neighbors. Eventually, the sandpile may achieve a stable configuration, which is when every stack of sand cannot reach the threshold to disperse. This idea of self-organizing criticality combines a multitude of complex processes into a simpler process. Chip-firing as a combinatorial game on graphs began from the works such as those of Spencer \cite{MR856644}, Anderson, Lov\'asz, Shor, Spencer, Tardos, and Winograd \cite{zbMATH04135751} and Bj\"orner, Lov\'asz, and Shor \cite{MR1120415}. Many variants of the chip-firing game (see, for instance, \cite{MR3311336, MR3504984, MR4486679}) allow for the discovery of unique properties. For instance, in \cite{MR3311336, MR3504984}, certain classes of stable configurations can be described as a critical group. When the chips are distinguishable, numerous properties of chip firing with indistinguishable chips fail, prompting a new area of study.

\subsection{Unlabeled Chip-Firing on Directed Graphs}
\label{sec:unlabeledchipfiring}

 Unlabeled chip-firing occurs when indistinguishable chips are placed on vertices in a directed graph $G= (V, E)$. If a vertex has enough chips to transfer one chip to each out-neighbor, then that vertex can fire. In other words, if there are at least $\outdeg(v)$ chips on a vertex $v$, it can fire. When a vertex fires, it sends one chip to each neighbor and thus loses $\outdeg(v)$ chips. Once all vertices can no longer fire, we reach a \textit{stable configuration} (see Section~\ref{sec:prelim} for the full definition).

\begin{example}
    Figure~\ref{fig:exampleunlabel} shows the unlabeled chip-firing process when we start with $4$ chips at the root of an infinite binary tree.
\end{example}

\begin{figure}[H]
\centering
    \subfloat[\centering Initial configuration with $4$ chips]{{\includegraphics[width=0.35\linewidth]{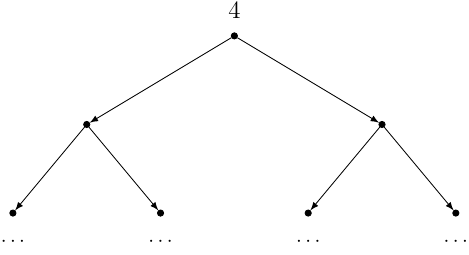} }}%
    \qquad
    \subfloat[\centering Configuration after firing once]{{\includegraphics[width=0.35\linewidth]{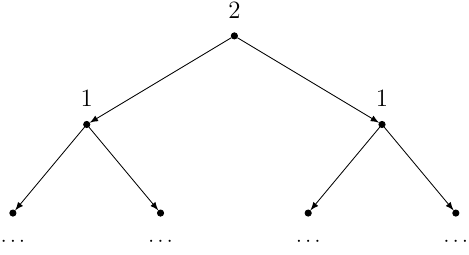} }}%
    \qquad
    \subfloat[\centering Configuration after firing twice]{{\includegraphics[width=0.35\linewidth]{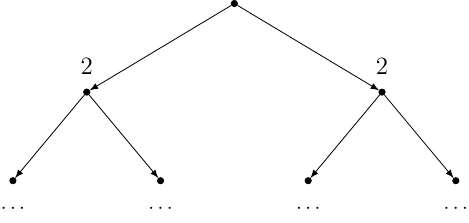} }}%
    \qquad
    \subfloat[\centering Stable configuration]{{\includegraphics[width=0.35\linewidth]{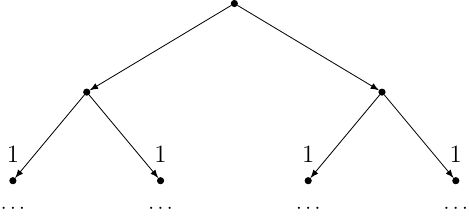} }}%
    \caption{Example of unlabeled chip-firing on an infinite directed, rooted binary tree}%
    \label{fig:exampleunlabel}
\end{figure}

Let us define a \textit{configuration} $c$ as a distribution of chips over the vertices of a graph, which is represented as a vector $\vec{c}$ in $\mathbb{N}^{|V|}$ (the set of infinite sequences indexed by the nonnegative integers whose entries are nonnegative integers), where the $k$th entry in $\vec{c}$ is the number of chips on vertex $v_k$ of the graph. One important property of directed graph chip-firing with unlabeled chips is the following property, which is analogous to the ``global confluence" property for chip-firing on undirected graphs (c.f. Theorem 2.2.2 of \cite{klivans2018mathematics}):

\begin{theorem}[Theorem 1.1 \cite{MR1203679}]\label{thm:GlobalConfluence}
    For a directed graph $G$ and initial configuration $c$ of chips on the graph, the unlabeled chip-firing game will either run forever or end after the same number of moves and at the same stable configuration. Furthermore, the number of times each vertex fires is the same regardless of the sequence of firings taken in the game.
\end{theorem}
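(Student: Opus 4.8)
The plan is to isolate a purely combinatorial ``least action'' statement about firing sequences and deduce the theorem from it. Call a finite or infinite sequence of vertices $\sigma = (u_1, u_2, \dots)$ a \emph{legal firing sequence from $c$} if, for each index $i$, the vertex $u_i$ carries at least $\outdeg(u_i)$ chips in the configuration obtained from $c$ by firing $u_1, \dots, u_{i-1}$ in order; note that any prefix of a legal firing sequence is again legal. A play of the game is exactly a legal firing sequence that is either infinite or ends at a stable configuration, i.e.\ one in which no vertex is legal to fire. For a \emph{finite} legal firing sequence $s$, write $\#_v(s)$ for the number of occurrences of $v$ in $s$, and let $c_s$ denote the configuration reached from $c$ by firing $s$. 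Counting the chips moved along each edge yields the order-independent formula
\[
  c_s(v) \;=\; c(v) \;+\; \sum_{u\,:\,u \to v} \#_u(s) \;-\; \outdeg(v)\,\#_v(s) ,
\]
so in particular the configuration reached by a finite legal firing sequence depends only on the multiset of vertices fired, not on the order.

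The core is the following \emph{least action lemma}: if $\sigma$ is any finite legal firing sequence from $c$ and $\tau$ is any legal firing sequence from $c$ that ends at a stable configuration, then $\#_v(\sigma) \le \#_v(\tau)$ for every vertex $v$. I would prove this by induction on the length $|\sigma|$, the case $|\sigma| = 0$ being trivial. For the inductive step, write $\sigma = \sigma'(v)$; then $\sigma'$ is a shorter legal firing sequence from $c$, so the induction hypothesis gives $\#_u(\sigma') \le \#_u(\tau)$ for all $u$, and it remains only to exclude the possibility $\#_v(\sigma') = \#_v(\tau)$. If that equality held, then comparing the displayed formula for $c_{\sigma'}(v)$ with the one for $c_\tau(v)$ — using $\#_u(\sigma') \le \#_u(\tau)$ for each in-neighbor $u$ of $v$ together with $\#_v(\sigma') = \#_v(\tau)$ — would give $c_{\sigma'}(v) \le c_\tau(v) < \outdeg(v)$, contradicting that $v$ is legal to fire from $c_{\sigma'}$ (which holds because $\sigma = \sigma'(v)$ is legal). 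Hence $\#_v(\sigma') < \#_v(\tau)$, so $\#_v(\sigma) \le \#_v(\tau)$, and the induction is complete. I expect the crux to be arranging this bookkeeping correctly: the induction must append a firing at the \emph{end} of $\sigma$ rather than the front, it relies on the fact that firing any vertex other than $v$ never removes chips from $v$ (equivalently, the in-flow term $\sum_{u\to v}\#_u(s)$ is nonnegative and nondecreasing in each $\#_u$), and it plays the below-threshold count of $v$ in the stable configuration $c_\tau$ against the count that legality of the last firing of $\sigma$ demands.

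Granting the lemma, the theorem follows readily. First, no play can be infinite if some play $\tau$ from $c$ is finite: an infinite play $\sigma$ would have a prefix $\sigma_N$ with $N := |\sigma_N| > |\tau|$, and then $\sum_v \#_v(\sigma_N) = N > |\tau| = \sum_v \#_v(\tau)$, contradicting $\sum_v \#_v(\sigma_N) \le \sum_v \#_v(\tau)$ from the lemma. So either every play is infinite, or every play is finite; in the latter case, applying the lemma to two finite plays $\sigma$ and $\tau$ in both directions gives $\#_v(\sigma) = \#_v(\tau)$ for every $v$, whence the two plays have the same length (hence the same number of moves) and, by the order-independent formula, reach the same stable configuration. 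This is exactly the dichotomy of Theorem~\ref{thm:GlobalConfluence}, including the equality of per-vertex firing counts.
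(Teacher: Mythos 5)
The paper itself offers no proof of this statement: it is imported verbatim as Theorem 1.1 of the cited Bj\"orner--Lov\'asz paper, so the only comparison available is with that source. Your proof is correct and complete. The order-independence formula for $c_s$ is right (for a directed multigraph one should read $\sum_{u} m(u,v)\,\#_u(s)$ with $m(u,v)$ the number of edges $u\to v$, and a self-loop at $v$ contributes equally to both sides of your key inequality, so nothing breaks), and the least action lemma is exactly the right pivot: the induction that appends the new firing at the \emph{end} of $\sigma$, isolates the would-be equality $\#_v(\sigma')=\#_v(\tau)$, and plays $c_{\sigma'}(v)\ge\outdeg(v)$ against $c_\tau(v)<\outdeg(v)$ is airtight; the deduction of the dichotomy (all plays infinite, or all plays finite with identical firing vectors, hence identical length and, by order-independence, identical stable configuration) then follows cleanly. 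This is the standard ``least action principle'' route, and it is genuinely different from Bj\"orner and Lov\'asz's original argument, which proceeds by showing that the words recording legal games form a left-hereditary language with a local exchange property and then invokes general confluence results for such languages. The least-action proof is more self-contained and quantitative --- it yields the inequality $\#_v(\sigma)\le\#_v(\tau)$ for \emph{any} legal prefix against \emph{any} terminating play, which is strictly more information than bare confluence and is the form most often reused in the abelian-networks literature --- whereas the language-theoretic proof places the result in a broader greedoid-type framework. Either is a legitimate proof of the theorem as stated.
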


\subsection{Labeled Chip-Firing on Directed Graphs}
\label{labeledchipfirng}

Labeled chip-firing is a variant of chip-firing where the chips are distinguishable. We denote this by assigning each chip a number from the set of $\{1,2,\dots, N\}$ where there are $N$ chips in total. A vertex $v$ can fire if it has at least $\outdeg(v)$ chips. When a vertex fires, we choose any $\outdeg(v)$ labeled chips and disperse them, one chip for each neighbor. The chip each neighbor receives may depend on the label of the chip. Labeled chip-firing was originally studied in the context of one-dimensional lattices \cite{MR3691530}. Labeled chip-firing has been studied on infinite binary trees when starting with $2^{\ell} - 1$ chips at the root for some $\ell \in \mathbb{N}$ (where $0 \in \mathbb{N}$) by Musiker and Nguyen \cite{musiker2023labeledchipfiringbinarytrees} and by the authors of this paper in \cite{inagaki2024chipfiringundirectedbinarytrees}.

In this paper, we study labeled chip-firing in the context of infinite directed $k$-ary trees for $k \geq 2$. Let us consider an infinite directed $2$-ary tree, or in other words, an infinite directed binary tree. Since each vertex $v$ has $\outdeg(v) = 2$, a vertex can fire if it has two chips. When a vertex fires, we arbitrarily select two chips and send the smaller chip to the left child and the larger one to the right. Note that when we say a chip is smaller or larger than another chip, we refer to the numerical values of the labels assigned to them. The mechanics of chip firing on $k$-ary trees is a straightforward generalization of the above.

In labeled chip-firing, Theorem~\ref{thm:GlobalConfluence} does not hold. This means that we can achieve different stable configurations depending on the sets of chips we arbitrarily select to fire. More precisely, two stable configurations would always have the same number of chips at each vertex, but the labels might differ.


\begin{example}
    Consider a directed binary tree with $4$ labeled chips: $(1,2,3,4)$ at the root. Notice that since chips are only sent along directed edges, once a chip is sent to the left or right, it cannot go back. Therefore, if we fire the pair of chips $(1,2)$ first, we end up with a different stable configuration than if we fire the pair $(2,3)$ first. Figure~\ref{fig:confluencebreak} illustrates this initial firing. 
\end{example}

\begin{figure}[H]
\centering
    \subfloat[\centering Configuration after firing $(1,2)$]{{\includegraphics[width=0.35\linewidth]{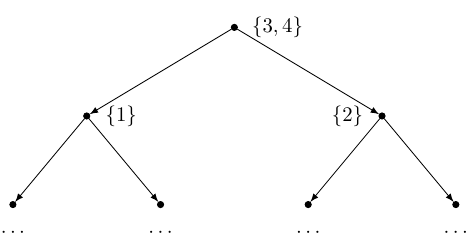} }}%
    \qquad
    \subfloat[\centering Configuration after firing $(2,3)$]{{\includegraphics[width=0.35\linewidth]{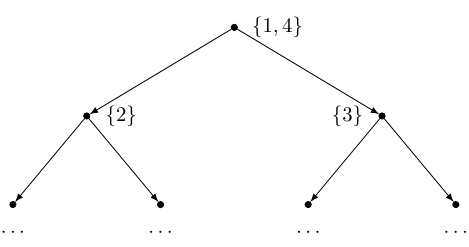} }}%
    \caption{Example of confluence breaking}%
    \label{fig:confluencebreak}
\end{figure}
   
Therefore, to obtain certain stable configurations, we pick certain sets of chips to fire, which is seen in Section~\ref{sec:PossibleSequences}. Thus, we are motivated to study the properties of labeled chip-firing, such as the number of stable configurations and where certain orders of labeled chips can appear in the stable configurations.

\subsection{Objectives and Roadmap}
Our problems are similar to those studied by Musiker and Nguyen in \cite{musiker2023labeledchipfiringbinarytrees}, but in the context of labeled-chip firing on directed $k$-ary trees, we can ask: 
\begin{itemize}
    \item How many different stable configurations are there? 
    \item What does the stable configuration look like? 
\end{itemize}

In Section~\ref{sec:prelim}, we more precisely introduce labeled chip-firing on directed $k$-ary trees and provide important definitions describing our setup. In Section~\ref{countingfinal}, we find the number of possible final stable configurations for a directed $k$-ary tree starting with $k^{\ell}$ labeled chips at the root in terms of $C_{k, m}$, the $m$th $k-$dimensional Catalan number. In Section~\ref{sec:PossibleSequences}, we prove general results on what configurations are possible. In particular, in a stable configuration, only vertices on layer $\ell+1$ have chips. Moreover, each vertex has exactly one chip. Thus, each stable configuration corresponds to a permutation. 

In Section~\ref{sec:DigitReversalPermutation}, we introduce the digit-reversal permutation $R_k(\ell)$, and we prove that it describes an attainable stable configuration $Z_k(\ell)$. In Section~\ref{sec:NumberOfInversions}, we find that the permutation corresponding to $Z_k(\ell)$ has the largest possible number of inversions, which is \[\frac{k^{2\ell}-\ell k ^{\ell+1}+(\ell-1) k^{\ell}}{4},\] among all permutations corresponding to stable configurations. In Section~\ref{sec:LongestDecreasingSubseq}, we find that the longest decreasing subsequence in $Z_k(\ell)$ has length $(k+1)k^{\ell/2-1}-1$ if $\ell$ is even and $2k^{(\ell-1)/2}-1$ if $\ell$ is odd. Finally, in Section~\ref{sec:LongestDecreasingSequencesGeneral}, we use our results from Section~\ref{sec:LongestDecreasingSubseq} to prove a lower and an upper bound for the longest possible decreasing subsequence that can appear in a stable configuration.

\section{Definitions and Basic Results}
\label{sec:prelim}

\subsection{Definitions}
\label{sec:definitions}

In this paper, we consider infinite rooted directed $k$-ary trees as our underlying graphs. 

In a \textit{rooted tree}, we denote one distinguished vertex as the \textit{root} vertex $r$. Every vertex in the tree, excluding the root, has exactly one parent vertex. A vertex $v$ has \textit{parent} $v_p$ if there is a directed edge $v_p \to v$. If a vertex $v$ has parent $v_p$, then vertex $v$ is a \textit{child} of $v_p$.

An \textit{infinite directed $k$-ary} tree is defined as an infinite directed rooted tree where each vertex has $\outdeg$ $k$ and $\indeg$ $1$ (except the root, which has $k$ children but zero parents). The edges are directed from a parent to children.

We define the \textit{initial state} of chip-firing as placing $N$ chips on the root where, in the case of labeled chip-firing, they are labeled $1,2, \dots, N$. A vertex $v$ can \textit{fire} if it has at least $\outdeg(v)=k$ chips. When vertex $v$ \textit{fires}, it transfers a chip from itself to each of its $k$ neighbors. In the setting of labeled chip firing, when a vertex fires, it chooses and fires $k$ of its chips so that among those $k$ chips, the one with the $ith$ smallest label gets sent to the $i$th leftmost child from the left. A \textit{strategy} is a procedure dictating an order in which $k$-tuples of chips on a vertex get fired from which vertex. In this paper, we assume $k \geq 2$ since if $k = 1$ and the tree has any positive number of chips, then the chip-firing process can continue indefinitely.

We define a vertex $v_j$ to be on \textit{layer} $i+1$ if the path of vertices traveled from the root to $v_j$ traverses $i$ vertices. Thus, the root $r$ is on layer $1$.

The following is the labeling procedure for vertices. Let us take the set of vertices on a layer $\ell$. We label the vertices $v_i, v_{i+1}, \dots, v_{i+j}$ on layer $\ell$ where $i = \frac{k^{\ell-1}-1}{k-1} + 1$ and $j =k^{\ell-1}-1$.  For vertex $v_i$, the $j$th leftmost child has label $v_{k(i-1)+j+1}$. Figure~\ref{figures/directedlabeleingex.pdf} represents the labeling for the first $2$ layers in the directed $5$-ary tree. 

\begin{figure}[H]
    \centering
    \includegraphics[width=0.4\linewidth]{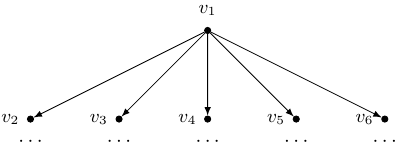}
    \caption{Labeling for $2$ layers in directed $5$-ary tree}
    \label{figures/directedlabeleingex.pdf}
\end{figure}

We denote the \textit{straight left descendant} of a vertex $v_n$ as any vertex $v_j$ where $j > n$ such that if we take the path of vertices from $v_j$ to $v_n$, each vertex on the path traversed is the left-most child of their parent and the \textit{straight right descendant} is defined similarly. If the straight left descendant of a vertex $v_n$ is on the last layer with chips in the stable configuration, it is called the \textit{bottom straight left descendant} of $v_n$, and the \textit{bottom straight right descendant} is defined similarly. A vertex $v_n$ is a \textit{top straight ancestor} of vertex $v_j$ if vertex $v_n$ is the left-most child of its parent and vertex $v_j$ is a straight right descendent of vertex $v_n$ or vice versa. In the case of the root, it is considered the top straight ancestor of the left and right descendants. 

The \textit{stable configuration} is a distribution/placement of chips over the vertices of a graph such that no vertex is able to fire. In this paper, we write each stable configuration as a permutation of $1, 2, \dots, k^{\ell}$, which is the sequence of chips in the $(\ell+1)$st layer of the tree in the stable configuration read from left to right. This is because, as we will see in the next subsection, the stable configuration will have one chip at each vertex in layer $\ell+1$. This is our convention for the rest of this paper (for instance, the stable configuration in Figure~\ref{fig:directedex} would be denoted by permutation/sequence $1, 2, 3, 4$).

\subsection{Unlabeled Chip-Firing on Directed \texorpdfstring{$k$}{k}-ary Trees}
\label{sec:unlabeled}

We first examine properties of unlabeled chip-firing on infinite directed $k$-ary trees (i.e., ignoring labels) when starting with $k^\ell$ chips at the root where $\ell \in \mathbb{N}^+$. As the stable configuration and the number of firings do not depend on the order of firings, we can assume that we start from layer 1 and proceed by firing all the chips on the given layer before going to the next layer. Thus, for each $t \in \{1,2, \dots, \ell \}$, each vertex on layer $t$ fires $k^{\ell-t}$ times and sends $k^{\ell-t}$ chips to each of its children. In the stable configuration, each vertex on layer $\ell + 1$ has exactly $1$ chip, and for all $i \neq \ell + 1$, the vertices on layer $i$ have $0$ chips.

\subsection{Labeled Chip-Firing on Directed \texorpdfstring{$k$}{k}-ary Trees}
\label{sec:labeled}

 We now give an example of a labeled chip-firing game on the directed $k$-ary tree for $k=2$ and establish a useful lemma on the positions of the smallest and largest chip in the stable configuration of labeled chips.
\begin{example}
Consider again a directed binary tree with $4$ labeled chips: $(1,2,3,4)$ at the root. Figure~\ref{fig:directedex} shows a possible sequence of firings that stabilizes a binary tree starting with $4$ labeled chips at the root. 
\end{example}
\begin{figure}[H]
    \centering
    \subfloat[\centering Initial configuration with $4$ chips]{{\includegraphics[width=0.4\linewidth]{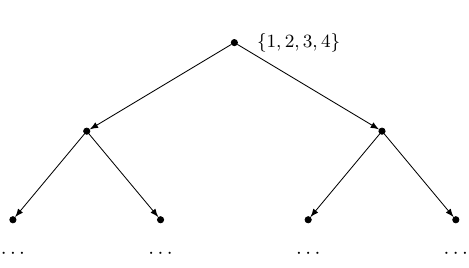} }}
    \qquad%
    \subfloat[\centering State after firing root once]{{\includegraphics[width=0.4\linewidth]{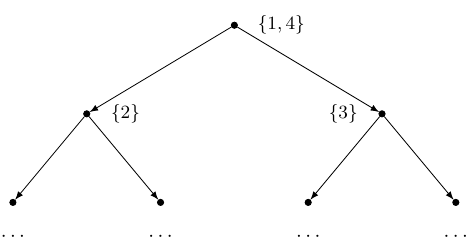} }}%
    \qquad
     \subfloat[\centering State after firing root a second time]{{\includegraphics[width=0.4\linewidth]{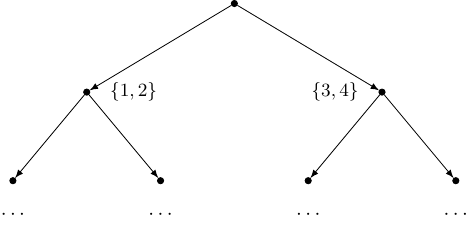} }}%
    \qquad
    \subfloat[\centering Stable configuration]
    {{\includegraphics[width=0.4\linewidth]{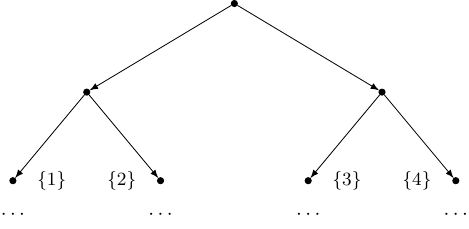} }}%
    \caption{Example of labeled chip-firing in a directed binary tree with $4$ chips}
    \label{fig:directedex}
\end{figure}

In the previous example, observe that given that a vertex fires a set of ordered pairs of labeled chips, any order in which those pairs of chips are fired yields the same distribution of chips to the children. This is a fact that holds in general: in chip-firing on directed $k$-ary trees, given that a vertex fires a set of $k$-element tuples of labeled chips, any order in which those $k$-tuples of chips are fired yields the same distribution of chips to the children.

We conclude the section with the final positions of the chips with the smallest and largest labels.

\begin{lemma}\label{lem:BottomStraightLeftStraightRight}
    If we start with $k^{\ell}$ labeled chips at a vertex $v$, then, in the stable configuration, the bottom straight left descendant and bottom straight right descendant of any vertex $v$ contain the smallest and largest chips, respectively, in the subtree with root $v$.
\end{lemma}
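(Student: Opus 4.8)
The plan is to prove the statement by induction on $\ell$, tracking only the smallest chip (call it $s$) and the largest chip (call it $L$); the two cases are symmetric, so I will describe the argument for $s$ and note that $L$ is handled identically with ``leftmost/smallest'' replaced by ``rightmost/largest''. The base case $\ell=0$ is vacuous (one chip at $v$, which is already on the bottom layer and is both its own straight-left and straight-right descendant). For the inductive step, start with $k^\ell$ chips at $v$. First I would invoke the confluence-style observation made just before the lemma: the final distribution of chips to the children of $v$ depends only on the \emph{set} of $k$-tuples fired from $v$, and by the unlabeled analysis of Section~\ref{sec:unlabeled} the \emph{number} of chips each child ultimately receives is fixed, namely $k^{\ell-1}$ per child. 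So the key question is which chips reach the leftmost child $v'$ of $v$.

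The crucial claim is that the smallest chip $s$ is always among the $k^{\ell-1}$ chips delivered to the leftmost child $v'$, no matter which strategy is used. To see this, consider the sequence of fires performed at $v$. Every time $v$ fires, it selects some $k$-tuple of chips currently at $v$; within that tuple the smallest-labeled chip goes to $v'$. I would argue that $s$, once it is chosen in some fire, is necessarily the minimum of whatever tuple it belongs to (since $s$ is the global minimum), hence it is sent to $v'$. And $s$ must be chosen in some fire from $v$, because $v$ fires $k^{\ell-1}\cdot\frac{k^\ell-1}{\cdots}$ — more simply, $v$ ends with zero chips (it is not on the bottom layer), so every chip initially at $v$, including $s$, eventually leaves $v$, and the only way a chip leaves $v$ is by being part of a fired tuple. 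Therefore $s$ lands on $v'$.

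Now $v'$ is the root of a directed $k$-ary subtree and receives $k^{\ell-1}$ chips, of which $s$ is the smallest (it is the global minimum, so a fortiori minimal among the chips in $v'$'s subtree). By the induction hypothesis applied to $v'$ with its $k^{\ell-1}$ chips, in the stable configuration the bottom straight left descendant of $v'$ contains the smallest of those chips, which is $s$. Since the straight-left descendant relation is transitive and $v'$ is the leftmost child of $v$, the bottom straight left descendant of $v'$ is exactly the bottom straight left descendant of $v$. This completes the induction for the smallest chip; the argument for the largest chip $L$ is verbatim the same with ``leftmost child,'' ``smallest,'' and ``straight left'' replaced by ``rightmost child,'' ``largest,'' and ``straight right,'' using that within any fired $k$-tuple the largest-labeled chip is sent to the rightmost child.

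The main obstacle I anticipate is making the ``$s$ always goes left'' step fully rigorous, in particular the assertion that $v$ must fire a tuple containing $s$ before stabilization — this needs the observation that a chip only moves by being fired and that $v$ retains no chips in the stable state, which in turn relies on the unlabeled result that layers $1,\dots,\ell$ end empty. A secondary point to state carefully is the transitivity/well-definedness of ``bottom straight left descendant'': one should note that the last layer with chips is layer $\ell+1$ regardless of $v$'s depth, so the notion is consistent between the subtree rooted at $v'$ (height $\ell-1$) and the subtree rooted at $v$ (height $\ell$). Neither of these is deep, so I expect the proof to be short once the inductive scaffolding is set up.
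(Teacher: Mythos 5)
Your proposal is correct and follows essentially the same route as the paper: the key observation in both is that the globally smallest (resp.\ largest) chip is the minimum (resp.\ maximum) of any fired $k$-tuple it belongs to, so it is always routed to the leftmost (resp.\ rightmost) child, and the claim then propagates down the tree. You merely make explicit the inductive scaffolding and the fact that every chip at a non-bottom vertex must eventually be fired, which the paper leaves implicit.
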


\begin{proof}
    Let $S$ denote the set of chips on the root $v$ of a subtree before this vertex starts firing. Let chip $c_1$ be the smallest labeled chip and chip $c_2$ be the largest labeled chip in $S$. No matter which $k$-tuple of chips we select to fire, chip $c_1$ will always be sent to the left since it is the smallest labeled chip, and chip $c_2$ will always be sent to the right since it is the largest labeled chip. Therefore, for some vertex $v$, in the subtree with the root at $v$, the bottom straight left descendant and bottom straight right descendant of vertex $v$ will contain the smallest and largest chips in that subtree, respectively.
\end{proof}

\section{Counting the Number of Stable Configurations}
\label{countingfinal}
In \cite{musiker2023labeledchipfiringbinarytrees}, one unanswered question is the number of possible stable configurations when starting with $2^\ell - 1$ labeled chips in an undirected binary tree. We answer this question in the directed $k$-ary tree setting. We find a bijection between the number of ways to sort the labeled chips to the $k$ children and the collection of certain lattice walks.

Let us have a vector space $\mathbb{R}^k$, where $\vec{e}_i$ is an $i$th elementary basis vector. We can consider $\vec{e}_i$ as one possible step on a \textit{walk}. Also, let us denote by $\vec{1}$ the vector in $\mathbb{R}^{k}$ with all entries being $1$.
\begin{definition}
      Define $A_{k,m}$ to be the collection of all walks in $\mathbb{R}^k$ of length $km$ starting at the origin, where $\vec{a}_i$ is the $i$-th step. All walks end at point $(m,m,\ldots,m)$, or equivalently $\sum_{j=1}^{km} \vec{a}_j = m \vec{1}$. In addition, the walks have the ballot property, where for all $1 \leq i \leq km$, the intermediate point on the walk $(x_1, x_2, \dots, x_{k}) = \sum_{j=1}^i \vec{a}_j$ is such that $x_1 \geq x_2 \geq ... \geq x_k$.
\end{definition}

\begin{example}
    A walk in $A_{2, 4}$. Figure~\ref{fig:2arywalk} illustrates a walk of length $8$ in $\mathbb{R}^2$. Each horizontal step is $\vec{e}_1$, and each vertical step is $\vec{e}_2$.
\end{example}
\begin{figure}[H]
    \centering
    \includegraphics[width=0.3\linewidth]{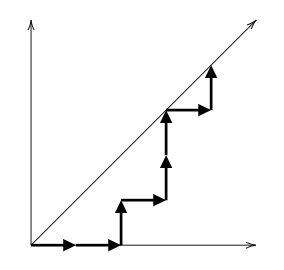}
    \caption{Example of a walk of length $8$ in $\mathbb{R}^2$ with the diagonal $y =x$}
    \label{fig:2arywalk}
\end{figure}
  
We show that there is a bijective mapping between $A_{k,k^{\ell-1}}$ and the ways of dispersing $k^\ell$ chips.  The cardinality of $A_{k,k^{\ell-1}}$ is the $k^{\ell-1}$th $k$-dimensional Catalan number \cite{MR1069169}. We denote such a number as $C_{k,m}$ where $k$ is the dimension and $m$ is the index. Thus, the cardinality of $A_{k,k^{\ell-1}}$ is $C_{k,k^{\ell-1}}$.
  
\begin{lemma}
\label{walktocat}
    If we start with $k^\ell$ labeled chips at the root, then the number of ways to disperse $k^\ell$ chips to $k$ root's children is exactly $C_{k,k^{\ell-1}}$.
\end{lemma}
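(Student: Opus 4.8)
The plan is to set up an explicit bijection between the attainable dispersals of the $k^{\ell}$ root chips among the $k$ children and the walk collection $A_{k,k^{\ell-1}}$, and then quote the stated fact $|A_{k,k^{\ell-1}}| = C_{k,k^{\ell-1}}$. By the unlabeled analysis in Section~\ref{sec:unlabeled}, the root fires exactly $k^{\ell-1}$ times and each child ends up with $k^{\ell-1}$ chips, so a dispersal is encoded by the function $\phi\colon\{1,\dots,k^{\ell}\}\to\{1,\dots,k\}$ sending each chip to the child that receives it, where $|\phi^{-1}(i)|=k^{\ell-1}$ for every $i$. To $\phi$ I associate the walk whose $t$-th step is $\vec a_t=\vec e_{\phi(t)}$; this is a length-$k\cdot k^{\ell-1}$ walk from the origin to $k^{\ell-1}\vec 1$, and $\phi\mapsto(\vec a_t)_t$ is obviously a bijection from the set of all such $\phi$ onto the set of all length-$k\cdot k^{\ell-1}$ walks ending at $k^{\ell-1}\vec 1$ with steps in $\{\vec e_1,\dots,\vec e_k\}$. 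The whole content of the lemma is then the claim that $\phi$ is \emph{attainable} (i.e.\ realized by some root-firing strategy) if and only if its walk has the ballot property. Note first that since chips never return to the root, the $k^{\ell-1}$ fired $k$-tuples always form a partition of the chip set into blocks of size $k$, and conversely any such partition can be fired (in any block order), with the firing rule forcing the $i$-th smallest chip of each block to go to child $i$; so ``attainable $\phi$'' means exactly ``$\phi$ induced by some partition of the chips into $k$-blocks.''

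For the forward direction, suppose $\phi$ comes from a partition $G_1,\dots,G_{k^{\ell-1}}$ into $k$-blocks. Fix $t$ and set $r_a=|G_a\cap\{1,\dots,t\}|$; the elements of $G_a$ in $\{1,\dots,t\}$ are its $r_a$ smallest, hence are sent to children $1,\dots,r_a$, so the number of chips in $\{1,\dots,t\}$ sent to child $i$ equals $\#\{a: r_a\ge i\}$, which is nonincreasing in $i$ --- exactly the ballot inequality $x_1\ge\dots\ge x_k$ at step $t$.

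For the converse --- the step I expect to be the main obstacle --- I would build a partition realizing a given ballot $\phi$ greedily: keep $k^{\ell-1}$ blocks, all initially empty, process the chips in increasing order, and when chip $t$ with $\phi(t)=i$ is processed, insert it into any block currently holding exactly $i-1$ chips. Since chips enter blocks in increasing order, the $m$-th chip entering a block is its $m$-th smallest and hence is the one sent to child $m$; consequently a block holds $\ge j$ chips iff it has already received a child-$j$ chip, so after the prefix $\{1,\dots,t\}$ the number of blocks with $\ge j$ chips is $f_j(t):=|\phi^{-1}(j)\cap\{1,\dots,t\}|$. Hence, just before inserting chip $t$ with $\phi(t)=i$, the number of blocks holding exactly $i-1$ chips is $k^{\ell-1}-f_1(t-1)$ when $i=1$ and $f_{i-1}(t-1)-f_i(t-1)$ when $i\ge2$. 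The first quantity is positive because $f_1(t-1)=f_1(t)-1\le k^{\ell-1}-1$; the second is positive because $\phi(t)=i$ gives $f_i(t-1)=f_i(t)-1$ and $f_{i-1}(t-1)=f_{i-1}(t)$, while the ballot inequality at step $t$ gives $f_{i-1}(t)\ge f_i(t)$, so $f_{i-1}(t-1)-f_i(t-1)\ge 1$. Thus the greedy insertion never gets stuck, and since $f_j(k^{\ell})=k^{\ell-1}$ for all $j$ every block ends with exactly $k$ chips, giving a legitimate partition inducing $\phi$. This establishes the bijection, so the number of dispersals equals $|A_{k,k^{\ell-1}}|=C_{k,k^{\ell-1}}$.
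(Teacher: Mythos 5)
Your proof is correct and follows essentially the same route as the paper: encode a dispersal as the walk whose $t$-th step is $\vec e_{\phi(t)}$, show the ballot property characterizes attainability in both directions, and invoke $|A_{k,k^{\ell-1}}|=C_{k,k^{\ell-1}}$. Your treatment is somewhat more explicit than the paper's (the prefix-counting argument for the forward direction and the greedy block-filling for the converse make precise what the paper's reconstruction of consecutive firings only sketches), but the underlying bijection and both implications are the same.
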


\begin{proof}
    We outline a procedure for constructing a walk given a dispersion of chips. If the chip labeled $i$ ends in the $j$-th leftmost child, then on the $i$-th step $\vec{a}_i = \vec{e}_j$. 

    We first observe that, indeed, this procedure maps from the possible ways of dispersing $k^{\ell}$ chips to $k$ children to walks in $A_{k, k^{\ell - 1}}$. Suppose for the sake of contradiction that for some dispersion of chips, the procedure outputs a walk $(\vec{a}_1, \vec{a}_2, \dots, \vec{a}_{k^{\ell}})$ not in $A_{k, k^{\ell - 1}}$. This means that either $\sum_{j=1}^{k^{\ell}} \vec{a}_j \neq k^{\ell-1} \vec{1}$ or for some $i \in [k^{\ell}]$  there is some $s \in [k-1]$ such that for $(x_1,  x_2, \dots, x_{k}) = \sum_{m=1}^{k^{\ell}} \vec{a}_m$, for $x_s < x_{s+1}$. If the former holds, this implies that in the dispersion, one child got more chips than the others, which cannot happen as each firing disperses one chip to each of the $k$ children. If the latter holds, this means that among the $i$th smallest chips, more of them got sent to the $(s+1)$st child from the left than to the $s$th child from the left, contradicting our firing rules.
    
    We show how to reconstruct the dispersion given a walk in $A_{k, k^{\ell-1}}$. Suppose the first appearance of $\vec{e}_j$ direction happens at $y_j$. Then, during the first firing, we send chip $y_j$ to $j$th vertex from the left. As at each moment the coordinates on the path decrease, we know that if $j_1 < j_2$, then the first step $\vec{e}_{j_1}$ happened before the first step in direction $\vec{e}_{j_2}$. Thus, $y_{j_1} < y_{j_2}$, and our firing assignment is legitimate. We similarly assign consecutive firings.
    
  Therefore, we have constructed a bijection between the dispersion of chips and $A_{k, k^{\ell-1}}$. Thus, the number of ways of dispersing chips is $C_{k,k^{\ell-1}}$.
\end{proof}

The closed form of $C_{k,m}$ was derived in \cite{MR1069169} (see also entry A060854 in \cite{oeis}) and is represented as:
\[\frac{\binom{km}{m,m,\dots,m}}{\binom{m + 1 }{ m} \binom{m+2 }{ m} \cdots \binom{m + k-1 }{ m}} .\] Notice that $C_{2, m}$ is equal to the $m$th Catalan number $C_m = \frac{1}{m+1}\binom{2m}{m}.$

Recall from Section~\ref{sec:unlabeledchipfiring} that, since the root fires until it no longer can, we have $k$ subtrees with $k^{\ell-1}$ chips at their roots $v_2, v_3, \dots v_{k+1}$. Thus, we can find the number of stable configurations recursively. We do this in Theorem~\ref{kdimensionalcatalan}. Let $\kappa(\ell)_k$ denote the number of stable configurations when starting with $k^\ell$ labeled chips at the root of the directed $k$-ary tree.
     
\begin{theorem}
\label{kdimensionalcatalan}
    The number of stable configurations when starting with $k^\ell$ labeled chips at the root, $\kappa(\ell)_k$, can be calculated recursively as 
    $$\kappa(\ell)_k = C_{k,k^{\ell-1}} C_{k,k^{\ell-2}}^{k^1}C_{k,k^{\ell-3}}^{k^2}C_{k,k^{\ell-4}}^{k^3}\cdots C_{k,k^1}^{k^{\ell-2}}C_{k,k^0}^{k^{\ell-1}}.$$
\end{theorem}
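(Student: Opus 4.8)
The plan is to prove this by induction on $\ell$, using the recursive subtree structure set up in the preceding paragraph. The base case is $\ell = 1$: starting with $k$ chips at the root, we fire once and the dispersion is forced (the $i$th smallest chip goes to the $i$th child), so there is exactly one stable configuration, and indeed the formula gives $\kappa(1)_k = C_{k,k^0} = C_{k,1} = 1$ since $C_{k,1} = \binom{k}{1,\dots,1} / \prod_{i=1}^{k-1}\binom{1+i}{1}$... wait, I should double-check this evaluates to $1$, but I expect it does by the combinatorial interpretation ($A_{k,1}$ contains only the walk $\vec e_1, \vec e_2, \dots, \vec e_k$).

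For the inductive step, I would argue as follows. By Theorem~\ref{thm:GlobalConfluence} applied to the underlying unlabeled game (and the remark that firing order within a vertex doesn't affect the dispersion), every stabilization consists of: first completely firing the root, which distributes the $k^\ell$ chips into the $k$ subtrees rooted at $v_2,\dots,v_{k+1}$ with $k^{\ell-1}$ chips each; then independently stabilizing each of those $k$ subtrees. A stable configuration is therefore determined by a choice of (i) the dispersion of the $k^\ell$ chips from the root into its $k$ children, and (ii) for each child, a stable configuration of that child's subtree given the set of $k^{\ell-1}$ chips it received. By Lemma~\ref{walktocat}, the number of choices for (i) is $C_{k,k^{\ell-1}}$. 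For (ii), the key observation is that the number of stable configurations of a subtree with $k^{\ell-1}$ labeled chips at its root depends only on $k^{\ell-1}$ as a quantity, not on which specific labels those chips carry — relabeling the $k^{\ell-1}$ chips by an order-preserving bijection to $\{1,\dots,k^{\ell-1}\}$ gives a correspondence between stabilizations, since all firing rules depend only on relative order. Hence each of the $k$ subtrees contributes a factor of $\kappa(\ell-1)_k$, and these choices are independent across the $k$ subtrees and independent of the choice in (i). This gives the recursion
\[
\kappa(\ell)_k = C_{k,k^{\ell-1}} \cdot \bigl(\kappa(\ell-1)_k\bigr)^{k}.
\]
Unrolling this recursion with the base case $\kappa(1)_k = C_{k,k^0}$ yields exactly the stated product, where the exponent of $C_{k,k^{\ell-j}}$ is $k^{j-1}$.

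The main obstacle — and the step deserving the most care — is justifying the multiplicativity in (ii): namely that distinct combinations of (root dispersion, subtree configurations) yield distinct overall stable configurations, and that every stable configuration arises this way. Distinctness follows because the root dispersion is recoverable from the stable configuration (the set of chips in each child's subtree is exactly the set of leaf-layer chips descended from that child), and then each subtree configuration is just the restriction of the overall configuration. Surjectivity follows from confluence: any stable configuration must have fully fired the root (otherwise the root could still fire, or leaves of higher layers would not have the right counts), and whatever happened afterward in each subtree is a valid independent stabilization of that subtree. I would also need to state cleanly the label-invariance claim used to replace the count for a subtree-with-arbitrary-labels by $\kappa(\ell-1)_k$; this is intuitively obvious from the order-only firing rule but should be spelled out. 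Finally, I would verify the exponent bookkeeping: if $\kappa(\ell)_k = C_{k,k^{\ell-1}}\kappa(\ell-1)_k^{\,k}$, then by induction $\kappa(\ell)_k = \prod_{j=1}^{\ell} C_{k,k^{\ell-j}}^{\,k^{j-1}}$, which matches the displayed formula term by term.
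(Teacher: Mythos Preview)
Your proposal is correct and follows essentially the same approach as the paper: establish the recursion $\kappa(\ell)_k = C_{k,k^{\ell-1}}\kappa(\ell-1)_k^{k}$ via Lemma~\ref{walktocat} and the subtree decomposition, check the base case $\kappa(1)_k = C_{k,1} = 1$, and then unroll by induction. You simply flesh out the multiplicativity and label-invariance steps that the paper takes for granted.
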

    
\begin{proof}
Once the root fires until it no longer can, we will have $k$ subtrees with $k^{\ell-1}$ chips at their roots $v_2, v_3, \dots, v_{k+1}$. Each subtree has $\kappa(\ell - 1)_k$ possible stable configurations. Thus, we have the recursive relation of $\kappa(\ell)_k = C_{k,k^{\ell-1}}\kappa(\ell - 1)_k^k$.  Let us prove the theorem statement by induction. When $\ell =1$, there is only $1$ configuration which is equal to $C_{k,1} = 1$. Let us assume that: 
$$\kappa(\ell)_k = C_{k,k^{\ell-1}} C_{k,k^{\ell-2}}^{k^1}C_{k,k^{\ell-3}}^{k^2}C_{k,k^{\ell-4}}^{k^3}\cdots C_{k,k^1}^{k^{\ell-2}}C_{k,k^0}^{k^{\ell-1}},$$
as the induction hypothesis. 

By the recursive relationship and the induction hypothesis, we have
            \begin{align*} 
            \kappa(\ell+1)_k &= C_{k,k^\ell}\left(C_{k,k^{\ell-1}} C_{k,k^{\ell-2}}^{k^1}C_{k,k^{\ell-3}}^{k^2}C_{k,k^{\ell-4}}^{k^3} \cdots C_{k,k^1}^{k^{\ell-2}}C_{k,k^0}^{k^{\ell-1}}\right)^k \\
             &= C_{k,k^{\ell}} C_{k,k^{\ell-1}}^{k^1}C_{k,k^{\ell-2}}^{k^2}C_{k,k^{\ell-3}}^{k^3} \cdots C_{k,k^1}^{k^{\ell-1}}C_{k,k^0}^{k^{\ell}},
            \end{align*}
            which concludes the proof. 
\end{proof}

\section{Sequences that Can Appear in the Stable Configuration}\label{sec:PossibleSequences}
In this section, we discuss possible stable configurations resulting from labeled chip-firing on a $k$-ary tree starting with $k^{\ell}$ labeled chips and introduce the stable configuration $Z_k(\ell)$. We denote the corresponding permutation as $Z_k(\ell)$ too. We will show later this permutation the maximal number of inversions when viewed as a sequence of chips in the $(\ell+1)$st row.

In addition to determining the positions where chips can end up, one can describe the possible stable configurations by finding that certain permutation patterns can or cannot appear.          
\begin{definition}
        Given a permutation $\sigma = \sigma_1, \sigma_2, \sigma_3, \dots, \sigma_n \in S_n$, we say that a subsequence $w=\{w_1,w_2,w_3,\dots,w_n\}$ has \textit{a permutation pattern} $\sigma$ if there are indices $i_1 < i_2< \dots < i_n$ such that for all $j \in \{1, 2, \dots, n\}$ of $w$, the term $w_{i_j}$ is the $\sigma_j$th smallest term in the subsequence $w_{i_{1}},w_{i_{2}},\dots,w_{i_{n-1}},w_{i_{n}}$. 
\end{definition}

\begin{example}
A permutation pattern of subsequence $3,5,6,9$ is $1,3,2,4$.
\end{example}
The study of permutation patterns is a growing area of interest in enumerative combinatorics, as seen in \cite{MR4596199}.

Consider a chip-firing strategy $F$ on chips 1 through $k^\ell$ at the root in a directed $k$-ary tree. Suppose we have a different situation with $k^\ell$ chips, where the labels are distinct but might not start with one and might have gaps. As we only care about the order of the labels, we can apply the same strategy $F$ to the new situation. If on chips $1$ through $k^\ell$ the stable configuration corresponds to permutation $\sigma$, then in the latter case, the stable configuration corresponds to a permutation with pattern $\sigma$. 

\begin{example}
Consider a directed binary tree. If strategy $F$ leads to the stable configuration $1,3,2,4$ on the standard set of chips, then on chips $3,5,6,9$, the same strategy $F$ leads to the stable configuration $3,6,5,9$.
\end{example}

Given a firing strategy $F$ on $k^n$ labeled chips, we can define new strategies on $mk^{n}$ labeled chips, where we divide the chips into $m$ groups of $k^n$ chips each and apply $F$ to each group independently. Here, we describe one such special strategy, which we call a $F$-\textit{bundle}. We divide the chips at the root into $m$ groups of chips, such that the $j$th group contains all chips of the form $(im + j)$, where $0 \leq i < k^n$. After applying the strategy $F$ to the elements in each group, a vertex on layer $n+1$ will get $m$ consecutive chips. The vertex that would have received $i$ with strategy $F$ in the chip-firing system starting with $k^n$ chips receives the chips $(i-1)m + 1$ through $im$.

\begin{example}
\label{ex:identity}
    Suppose $F_{id}$ is a firing strategy on one vertex of a $k$-ary tree with $k$ chips. Consider a $F_{id}$-bundling strategy at the root and at every other layer except the last. The stable configuration of such firing is the identity permutation.
\end{example}

The bundling strategy corresponds to a notion of inflated permutation. These definitions were first introduced in \cite{MR2170110} and used in \cite{MR4245257}.

\begin{definition}[Inflation]
Given a permutation $\tau$ of length $n$, the \textit{inflation} of $\tau$ with a sequence of $n$ permutations $\gamma_1, \ldots, \gamma_n$ is a permutation $\tau'$ of length $|\gamma_1| + \cdots + |\gamma_n|$ that consists of $n$ blocks, such that the $i$-th block is order-isomorphic to $\gamma_i$, and any restriction of $\tau'$ to one element in each block is order-isomorphic to $\tau$. This inflation is denoted as $\tau[\gamma_1, \ldots, \gamma_n]$.
\end{definition}

We are interested in the case when all permutations $\gamma_i$ are of the same length. In this case, the length of the inflation is the length of $\tau$ times the length of each block $\gamma$.

\begin{example}
If $\tau = 231$, $\gamma_1 = 21$, $\gamma_2 = 12$, and $\gamma_3 = 21$, we get the inflation $231[21, 12, 21] = 43\ 56\ 21$, where we add spaces for clarity. Note how each element in the original permutation $\tau$ corresponds to a block of elements in the inflation.
\end{example}

\begin{proposition}\label{prop:BundleBlockPerms}
Suppose we start with $k^{n+m}$ labeled chips and fire the $F$-bundle from the root, where $F$ is a strategy of firing $k^n$ chips that leads to the stable configuration with permutation $\tau$. Then we fire strategy $F_i$ from $i$th leftmost vertex in the $(n+1)$st layer, where applying strategy $F_i$ leads to the stable configuration with permutation $\gamma_i$. Then, our overall strategy leads to the stable configuration with permutation, which is inflation $\tau[\gamma_1, \ldots, \gamma_{k^n}]$.
\end{proposition}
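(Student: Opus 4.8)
The plan is to unwind the definitions so that the $F$-bundle structure is matched against the blocks of an inflation. First I would set up notation: label the $k^{n+m}$ chips $1, 2, \ldots, k^{n+m}$, and recall that the $F$-bundle partitions them into $k^n$ groups $G_1, \ldots, G_{k^n}$, where $G_j = \{ i k^n + j : 0 \le i < k^m \}$ wait — I need to be careful with the exact bundling convention from the paper: the $j$th group consists of chips $im + j$ with $0 \le i < k^n$, adapted here to $m$ replaced by $k^m$ (there are $k^n$ groups of $k^m$ chips each). So $G_j = \{ i k^m + j : 0 \le i < k^n \}$ — again I must reconcile this with the paper's statement ``the vertex that would have received $i$ with strategy $F$ receives chips $(i-1)m+1$ through $im$''; in our instantiation the vertex in layer $n+1$ that $F$ sends chip $i$ to receives the block $B_i := \{ (i-1)k^m + 1, \ldots, i k^m \}$ of $k^m$ consecutive chips. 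The key fact I would invoke, which is exactly the content of the paper's discussion of $F$-bundles (and which could be cited or quickly re-derived from the firing rules, since the bundle is just $F$ applied on the ``coarsened'' order of the $k^n$ groups), is that after firing the $F$-bundle from the root and all intermediate layers down to layer $n+1$, the $i$th leftmost vertex of layer $n+1$ holds precisely the block $B_i$, and these $k^m$ chips sit there as $k^m$ chips with consecutive labels.

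Next I would observe that from the state where vertex $u_i$ (the $i$th leftmost vertex of layer $n+1$) holds the $k^m$ consecutive chips $B_i$, the subtrees rooted at the $u_i$ are disjoint and chip-firing in one subtree never affects another (chips only move along directed edges away from the root). So applying strategy $F_i$ in the subtree of $u_i$ is a completely independent process. By the pattern-invariance remark in the paper (the paragraph right before Proposition~\ref{prop:BundleBlockPerms}, formalizing that only the relative order of labels matters), since $F_i$ applied to the standard chips $1, \ldots, k^m$ yields stable permutation $\gamma_i$, applying $F_i$ to the shifted consecutive chips $B_i = \{(i-1)k^m + 1, \ldots, ik^m\}$ yields, in layer $n+1+m = n+m+1$ below $u_i$, the $k^m$ chips of $B_i$ arranged in the pattern $\gamma_i$ — i.e., the $t$th leftmost leaf below $u_i$ gets chip $(i-1)k^m + (\gamma_i)_t$.

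Then I would assemble the global stable configuration: read left to right along layer $n+m+1$, the leaves below $u_1$ come first (carrying $B_1$ arranged as $\gamma_1$), then those below $u_2$ (carrying $B_2$ arranged as $\gamma_2$), and so on, in the left-to-right order of the $u_i$'s in layer $n+1$ — but that left-to-right order of the $u_i$'s is, by definition of $\tau$, governed by $\tau$ (vertex $u_i$ is where $F$ sent chip $i$, so the $u_i$'s read left to right realize the permutation $\tau$). Hence the full sequence is: $k^n$ consecutive blocks $B_1, \ldots, B_{k^n}$ laid out left to right, the $i$th block internally order-isomorphic to $\gamma_i$, and choosing one representative from each block recovers (the relative order of) $\tau$. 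That is exactly the definition of the inflation $\tau[\gamma_1, \ldots, \gamma_{k^n}]$, completing the proof.

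The main obstacle I anticipate is purely bookkeeping: making the ``$F$-bundle'' claim precise — that the $i$th vertex of layer $n+1$ ends up with exactly the block $B_i$ of consecutive labels — and lining up the two indexing conventions (which label goes to which group, versus which vertex receives which block) so the blocks of the inflation come out in the order dictated by $\tau$ rather than its inverse. Once that alignment is pinned down, the independence of the subtree firings and the pattern-invariance lemma make the rest essentially formal; no genuinely hard combinatorics is involved, but the proof should be written carefully to avoid an off-by-convention error in which permutation ($\tau$ or $\tau^{-1}$) indexes the blocks.
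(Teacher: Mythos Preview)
Your plan is correct and matches the paper's approach: both proofs argue that the $F$-bundle deposits consecutive blocks $B_i=\{(i-1)k^m+1,\ldots,ik^m\}$ onto the layer-$(n+1)$ vertices in an order governed by $\tau$, then invoke independence of the subtree firings together with pattern-invariance to get $\gamma_i$ inside each block, and finally read off the definition of inflation. The only wrinkle is the indexing slip you already flag: at one point you call $u_i$ ``the $i$th leftmost vertex of layer $n+1$'' and also ``where $F$ sent chip $i$,'' which are in general different (the $p$th leftmost vertex holds $B_{\tau_p}$, not $B_p$, and it is this vertex to which $F_p$ is applied); once you fix that convention the argument goes through exactly as in the paper.
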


\begin{proof}
Consider collections of chips $S_{\ell} = \{\ell k^m+1, \ell k^m+2, ....., (\ell+1)k^m\}$ defined for $\ell \in \{0, 1, ..., k^{n}-1\}.$ In performing the $F$-bundling, we treat each $S_{\ell}$ as a single chip $\ell$ and then apply strategy $F$ to obtain state $\tau.$ Now replace each $\ell$ with $\{\ell k^m+1, \ell k^m+2, ....., (\ell+1)k^m\}.$ Observe that if we restrict the state of the chips on the tree to one chip per vertex, we obtain that the resulting sequence is order isomorphic to $\tau.$ 

Now, consider for each $i$ applying strategy $F_i$ to the subtree rooted at the vertex that is the $i$th from the left in layer $n+1$. This ensures that in the stable configuration of chips, the sequence of chips that are in the subtree rooted by the $i$th vertex from the left in the $(n+m+1)$st layer is order isomorphic to $\gamma_i.$ Combining with the previous paragraph, this implies that our stable configuration is inflation $\tau[\gamma_1, \gamma_2, ..., \gamma_{k^n}].$
\end{proof}

There is a particular case of inflation that is often used. Rather than specifying the inflation of a permutation $\tau$ with a sequence of $|\tau|$ different permutations $\gamma_1, \ldots, \gamma_n$, we have a special notation for the case when all of these permutations are the same, i.e., $\gamma_1 = \cdots = \gamma_n$.

\begin{definition}[Tensor product]
Given two permutations $\tau \in S_n$ and $\gamma \in S_m$, their \emph{tensor product} $\tau[\gamma]$ is a permutation of length $mn$ that consists of $n$ blocks of length $m$, where each block is order-isomorphic to $\gamma$, and the restriction of $\tau[\gamma]$ to one element in each block is order-isomorphic to $\tau$.
\end{definition}

\begin{example}\label{ex:987654321}
Consider $\tau = \gamma = 321$, then $\tau[\gamma] = 987\ 654\ 321$ is the decreasing permutation in $S_9$. If $k=2$, and we start with 8 chips on a directed binary tree, there exists a strategy $F$ that can get us a stable configuration with a permutation that has a subsequence order isomorphic to 321, which is shown in Example~\ref{ex:unbundling}. It follows that if we perform the $F$-bundle on 64 chips and then repeat the $F$ strategy on each subtree, we can get a permutation order isomorphic to 987654321.
\end{example}

Now, we define another strategy that is similar to bundling but is opposite in some sense. Given a firing strategy $F$ on $k^n$ labeled chips, we call our new strategy on $mk^{n}$ labeled chips an $F$-\textit{unbundle}. We divided the chips at the root into $m$ groups of chips, such that each group contains a set of chips from $ik^n+1$ to $(i+1)k^n$, for $0 \leq i < m$. After applying the strategy $F$ at the root to each group of chips, on layer $n$, we get $k^n$ vertices each with $m$ chips. The vertex that would have received $i$ with strategy $F$ receives the chips $i, i+k^n$, $i+2\cdot k^n$, $i+3\cdot k^n$, $\dots$, $i+(m-1)k^n$.

\begin{definition}\label{def:zStrategy}
Let $F_{id}$ be a firing strategy on one vertex with $k$ chips. Consider an $F_{id}$-unbundling strategy at the root and at every other vertex except in the last layer. As a result, our stable configuration is a special permutation, which is extreme in some senses. We denote this permutation as $Z_k(\ell)$ or $Z$ when $k$ and $\ell$ are clear. Also, we sometimes refer to the corresponding stable configuration as $Z_k(\ell)$.
\end{definition}

\begin{example}
\label{ex:unbundling}
        Suppose we have 8 chips at the root of a binary tree. We use the $F_{id}$-unbundling strategy at every node to get to the $Z_2(3)$ permutation. First, we fire pairs $(1, 2)$, $(3, 4)$, $(5, 6)$, and $(7, 8)$. Then, on the left child of the root, we fire $(1, 3)$ and $(5, 7)$, and on the right child of the root, we fire $(2, 4)$ and $(6, 8)$. We obtain in the end the permutation $Z_2(3)$: 1, 5, 3, 7, 2, 6, 4, 8. Figure~\ref{fig:z23ub} illustrates the complete firing process. 
\end{example}

\begin{figure}[H]
    \centering
    \subfloat[\centering Initial configuration with $8$ chips]{{\includegraphics[width=0.4\linewidth]{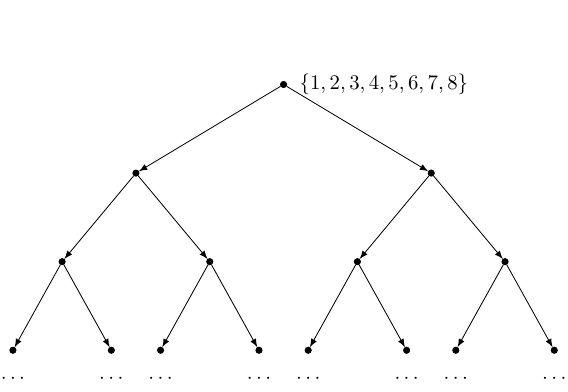} }}
    \qquad%
    \subfloat[\centering State after firing pairs: $(1,2), (3,4), (5,6), (7,8)$]{{\includegraphics[width=0.4\linewidth]{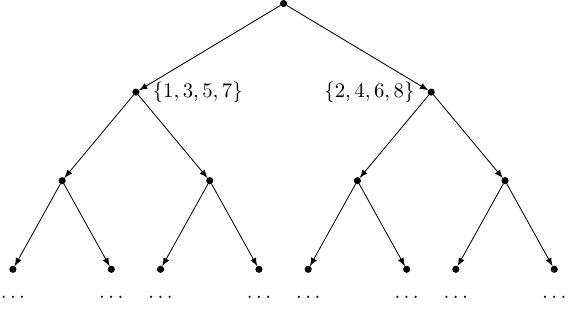} }}%
    \qquad
     \subfloat[\centering State after firing pairs: $(1,3), (5,7), (2,4), (6,8)$]{{\includegraphics[width=0.4\linewidth]{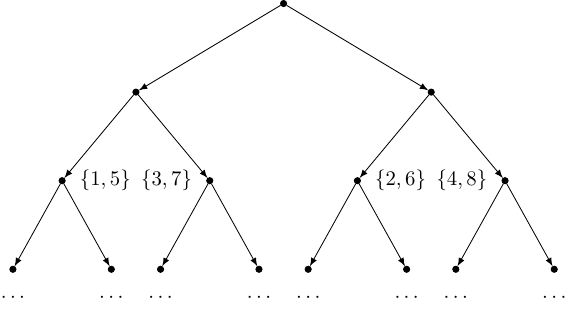} }}%
    \qquad
    \subfloat[\centering Stable configuration]
    {{\includegraphics[width=0.4\linewidth]{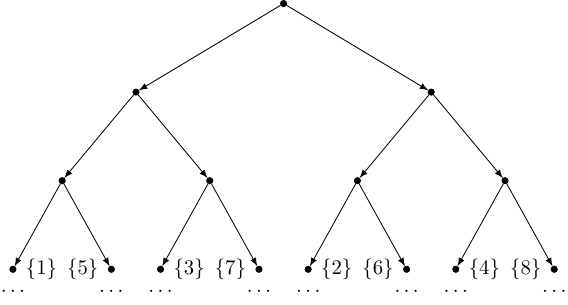} }}%
    \caption{Firing process to obtain the $Z_2(3)$ permutation}
    \label{fig:z23ub}
\end{figure}

\begin{theorem}
\label{thm:anyperm}
    For any permutation pattern $P$ of length $k^n$, one can construct $P$ as a permutation pattern of a subsequence in a stable configuration through the firing of $k^{2n}$ chips starting at the root. 
\end{theorem}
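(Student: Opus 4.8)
The plan is to realize an arbitrary permutation pattern $P$ of length $k^n$ as a subsequence of a stable configuration coming from $k^{2n}$ chips, by combining the bundling machinery (Proposition~\ref{prop:BundleBlockPerms}) with the universality one gets from unbundling and from Example~\ref{ex:identity}. The key observation is that $k^{2n} = k^n \cdot k^n$, so we may split the process into an outer stage acting on $k^n$ ``super-chips'' followed by $k^n$ independent inner stages, each acting on $k^n$ chips in a subtree rooted at layer $n+1$. By Proposition~\ref{prop:BundleBlockPerms}, if we fire an $F$-bundle from the root where $F$ stabilizes $k^n$ chips to permutation $\tau$, and then fire strategy $F_i$ on the $i$th subtree at layer $n+1$ producing $\gamma_i$, the overall stable configuration is the inflation $\tau[\gamma_1,\ldots,\gamma_{k^n}]$.

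The first step is to pick the right $\tau$ and the right $\gamma_i$'s so that $\tau[\gamma_1,\ldots,\gamma_{k^n}]$ contains $P$ as a pattern. The cleanest choice: let $\tau$ be the \emph{identity} permutation of length $k^n$ (attainable by Example~\ref{ex:identity}, using an $F_{id}$-bundle strategy within the subtree that realizes the required $k^n$ chips), and choose each $\gamma_i$ to be a single-element ``spike'' placed so that, reading one representative element from each block left to right, we trace out exactly $P$. More precisely, with $\tau = \mathrm{id}$, the inflation $\mathrm{id}[\gamma_1,\ldots,\gamma_{k^n}]$ has its $i$th block occupying value-range and position-range both equal to the $i$th segment of $\{1,\ldots,k^{2n}\}$; within block $i$ we want to select the one element whose \emph{within-block rank} equals $P_i$ — that is, take $\gamma_i$ to be any permutation of length $k^n$ whose, say, first entry equals $P_i$ (for instance $\gamma_i = P_i,\, (\text{the rest in increasing order})$). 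Then the subsequence formed by the first element of each block is, in order, $P_1 + (1-1)k^n$-ish shifts... carefully: the selected element from block $i$ has global value $(i-1)k^n + P_i$; since $i \mapsto (i-1)k^n + P_i$ is strictly increasing in the high-order digit $i$ but the low-order digit $P_i$ ranges over a permutation, the relative order of these $k^n$ selected values is governed precisely by $i$, which is the identity — that is \emph{not} $P$. So instead I must not take $\tau=\mathrm{id}$; I need $\tau$ to carry the pattern.

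So the corrected first step: take $\tau = P$ itself realized at the outer stage. This requires knowing that $P$ is attainable as an actual stable configuration on $k^n$ chips at the root of a $k$-ary tree — but that is exactly the content of an induction on $n$ (the statement we are proving, one level down, gives any pattern of length $k^{n/2}$, which is not enough). The honest route is therefore a direct induction on $n$: for $n=0$ the statement is trivial (the single permutation of length $1$). For the inductive step, to get an arbitrary $P$ of length $k^n = k \cdot k^{n-1}$ as a pattern inside a stable configuration of $k^{2n}$ chips, first use an $F$-unbundle / bundle decomposition to split $P$ itself into an inflation $\pi[\rho_1,\ldots,\rho_k]$ where $\pi$ has length $k$ and each $\rho_j$ has length $k^{n-1}$ (any permutation admits such a block decomposition — e.g. group the positions into $k$ consecutive blocks of size $k^{n-1}$; $\pi$ records the relative order of the blocks' value-ranges, $\rho_j$ the shape within block $j$). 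Then: at the root of the $k$-ary tree with $k^{2n}$ chips, fire an $F_\pi$-bundle where $F_\pi$ is a strategy on $k$ chips giving $\pi$ (only $k!$ permutations of length $k$ are possible from $k$ chips on one vertex, but actually only the ones realizable are the ``sorted'' ones — hmm).

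Here is the real subtlety and the main obstacle: from a \emph{single} vertex with $k$ chips there is exactly one firing outcome — the chips go out sorted, giving the identity pattern. So the outer permutation $\tau$ at the very root can only be the identity, and all the ``pattern complexity'' must be injected deeper in the tree. The right framework is thus: by Proposition~\ref{prop:BundleBlockPerms} applied at \emph{every} layer from $1$ to $n$, a choice of $F_{id}$-type bundling at layers $1,\ldots,n$ combined with arbitrary strategies $F_1,\ldots,F_{k^n}$ on the $k^n$ subtrees rooted at layer $n+1$ (each subtree being an infinite $k$-ary tree receiving $k^n$ chips) yields stable configuration $\mathrm{id}_{k^n}[\gamma_1,\ldots,\gamma_{k^n}]$ where $\gamma_i$ is \emph{any} stable configuration attainable on $k^n$ chips. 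Now within this inflation, pick from block $i$ the element of within-block rank $r_i$; its global value is $(i-1)k^n + r_i$. To make the selected subsequence have pattern $P$, we need the map $i \mapsto (i-1)k^n + r_{\tau^{-1}... }$ — the clean fix is to \emph{not} take one element per block in block-order, but to take all $k^n$ selected elements and note we are free to choose \emph{which} block each comes from and with what rank. Choosing: from block $i$, select the element of within-block rank equal to anything; the $i$th smallest \emph{block} contributes a value in $((i-1)k^n, ik^n]$, so the block index totally dominates the order. Hence any subsequence taking $\le 1$ element per block in block-index order is forced to be increasing — useless. Taking $\ge 2$ per block is needed.

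The resolution — and the actual plan — is: apply the inflation identity with $\tau = \mathrm{id}_{k^n}$ but choose \emph{each} $\gamma_i$ to equal the \emph{same} target-related permutation, OR better, realize $P$ inside a single $\gamma_i$ by the inductive hypothesis: $\gamma_i$ lives on $k^n$ chips, and by induction (on $n$) any pattern of length $k^{n/2}$... still not length $k^n$. The genuinely correct and simple argument, which I now commit to: prove it by strong induction on $n$, where the inductive step writes $P$ (length $k^n$) as an inflation $\pi[\rho_1, \dots, \rho_{k^{n-1}}]$ with $\pi$ of length $k^{n-1}$ and each $\rho_j$ of length $k$; realize $\pi$ as a pattern in a stable configuration of $k^{2(n-1)}$ chips by the inductive hypothesis, then ``zoom in'' on the $k^{n-1}$ positions realizing $\pi$ and, using the freedom of bundling on the subtrees below those positions (which still have plenty of layers and, after scaling the chip count to $k^{2n}$, enough chips), inflate each into a length-$k$ block realizing $\rho_j$ via Proposition~\ref{prop:BundleBlockPerms}. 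The bookkeeping that $k^{2n}$ chips suffice for all of this simultaneously — each of the $k^{n-1}$ chosen subtrees gets $k^{n+1}$ chips, of which an $F_{id}$-bundle then isolates blocks of size $k^{n-1}$ that get further processed — is the routine-but-fiddly part; the conceptual heart is the two applications of the inflation/bundle proposition, and the main obstacle is organizing the induction so that the chip counts and layer depths line up (concretely: ensuring that ``$k^{2n}$ chips at the root, processed by bundling down to layer $n+1$, leaves each relevant subtree with exactly enough chips to run the inductive construction''). I expect that step — the precise alignment of the recursion's parameters — to be where all the care is needed; everything else is a direct appeal to Proposition~\ref{prop:BundleBlockPerms} and Example~\ref{ex:identity}.
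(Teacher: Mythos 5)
There is a genuine gap, and it is located exactly where you flagged uncertainty. Your committed argument rests on the claim that any permutation $P$ of length $k^n$ can be written as an inflation $\pi[\rho_1,\ldots,\rho_{k^{n-1}}]$ with blocks of length $k$ obtained by ``grouping the positions into consecutive blocks.'' That is false: an inflation requires each block's \emph{value set} to be an interval, and grouping positions consecutively does not guarantee this. For $k=2$, $n=2$, the permutation $1\,3\,2\,4$ has position-blocks with value sets $\{1,3\}$ and $\{2,4\}$, so it is not an inflation of this shape; only $8$ of the $24$ length-$4$ permutations are. So the induction cannot even get started on a general $P$, independent of the unresolved ``zoom in'' bookkeeping (which is itself problematic: once the inductive stable configuration is reached, each chosen position holds a single chip whose value is fixed, and there is no mechanism for retroactively inflating it into a block with prescribed interleaving).

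The idea you are missing is the one you brushed past: \emph{unbundling}, not bundling, at the top of the tree. You correctly diagnosed that bundling is useless here because each block occupies a consecutive value range, so one element per block is forced to be increasing. Unbundling inverts this: if $F$ is a strategy sending $k^n$ chips to the identity on layer $n+1$, then $F$-unbundling $k^{2n}$ chips from the root delivers to the $i$th subtree at layer $n+1$ precisely the residue class $\{i,\ i+k^n,\ i+2k^n,\ \ldots,\ i+k^{2n}-k^n\}$ --- values spread across the entire range rather than a consecutive interval. Now finish the firing \emph{arbitrarily}, and from subtree $i$ select the single chip $i+(p_i-1)k^n$. These chips appear left to right in subtree order $i=1,\ldots,k^n$, while their values are ordered by the dominant term $(p_i-1)k^n$, i.e., exactly as $p_1,\ldots,p_{k^n}$. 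That one-step construction is the paper's entire proof; no induction on $n$ and no decomposition of $P$ is needed.
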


\begin{proof}
    Consider permutation $P$ and strategy $F$ for obtaining the identity permutation in the $(n+1)$st layer when starting with $k^n$ labeled chip. Now, when we have $k^{2n}$ chips, we apply $F$-unbundling at the root. Thus, each vertex on layer $n+1$ has chips $c, c+k^{n}, c+k^{n+1}, \dots , c+k^{2n}-k^{n}$, where $c$ is the chip that the vertex on layer $k^n$ would have received when applying strategy $F$. We now apply any strategy to finish the firings. To obtain permutation pattern $P = p_1p_2 \dots p_{k^n}$ from the resulting stable configuration, we take chips $i + (p_i - 1)k^n$ for all $i \in [1,k^n]$. Since $1 + (p_1 - 1)k^n, 2+ (p_2 - 1)k^n, \dots, k^n + (p_{k^n}-1)k^n$ is a subsequence of the stable configuration and since $1 + (p_1 - 1)k^n, 2+ (p_2 - 1)k^n, \dots, k^n + (p_{k^n}-1)k^n$ has the same relative order as $p_1,p_2,\dots, p_{k^n}$, we obtain that the permutation pattern $P = p_1p_2\dots p_{k^n}$ appears in the stable configuration.
\end{proof}

\section{The Digit-Reversal Permutation}\label{sec:DigitReversalPermutation}

 Since the order of which chips are fired at each vertex does not matter, we need a clever strategy to create a permutation with a lot of inversions at the stable configuration. As we know, we can get an identity permutation at the stable configuration by using $F_{id}$-bundling at each vertex. However, the stable configuration cannot have chips in decreasing order, as, for example, the first chip is always labeled one. In this section, we study the digit-reversal permutation and show that it describes the stable configuration with the largest possible number of inversions.

We define the digit-reversal permutation $R'_k(\ell):$
\begin{definition}
    A \textit{radix-$k$ digit-reversal permutation} $R_k'(\ell)$ is a permutation of $k^\ell$ numbers from 0 to $k^\ell-1$. We represent each integer from $0$ to $k^{\ell}-1$ in base $k$ and prepend it with zeros, so each number becomes a string of length $\ell$. After that, we map each number to the number whose representation has the same digits in the reversed order \cite{MR900058}. We define $R_k(\ell)$ to denote the permutation of $1, 2, \dots, k^{\ell}$ where we add $1$ to each term in $R_k'(\ell).$
\end{definition}
For $k=2$, the digit-reversal permutation is often called a \textit{bit-reversal permutation}. It is the same as the sequence consisting of the $2^{\ell}$th to $(2^{\ell+1}-1)$st element of sequence A030109 in the OEIS \cite{oeis}.

\begin{example}\label{ex:321}
    We compute the bit-reversal permutation $R_2'(3)$ of length $2^3$. First, we begin with $0, 1, 2, ..., 2^3-1$. We write the numbers in $000, 001, 010, 011, 100, 101, 110, 111.$ Then we reverse the bits to obtain $000, 100, 010, 110, 001, 101, 011, 111$, which in decimal are $0, 4, 2, 6, 1, 5, 3, 7.$ Therefore, we obtain $R_2'(3)$ to be $0, 4, 2, 6, 1, 5, 3, 7.$ By adding $1$ to each term in $R_2'(3)$, we obtain that permutation $R_2(3)$ is 1, 5, 3, 7, 2, 6, 4, 8. We observe that this is the same permutation as $Z_2(3)$. We show that this is not a coincidence in the next section.
\end{example}

We now prove that $R_k(\ell)$ is an attainable stable configuration of $k^{\ell}$ chips on a $k$-ary directed tree. Recall from Definition \ref{def:zStrategy} that $Z_k(\ell)$ is the permutation representing the stable configuration resulting from the $F_{id}$-unbundling strategy at the root and at every other vertex except in the last layer. We show that $Z_k(\ell)$ is $R_k(\ell)$.

\begin{proposition}\label{prop:k-itReversedPermutation} The permutation $Z_k(\ell)$ is $R_k(\ell)$, i.e., the permutation of $1, 2, \dots, k^{\ell}$ resulting from adding $1$ to the radix-$k$ digit-reversal permutation $R'_k(\ell)$ of $k^{\ell}$ elements $0, 1, 2, \dots, k^{\ell}-1$.
\end{proposition}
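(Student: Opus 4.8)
The plan is to prove the identification $Z_k(\ell) = R_k(\ell)$ by induction on $\ell$, tracking which vertex in layer $\ell+1$ receives each chip and comparing that to the digit-reversal map. For $\ell = 1$, the $F_{id}$-unbundling strategy is just one firing of $k$ chips at the root, so chip $i$ goes to the $i$th leftmost child; this is the identity permutation, and $R_k(1)$ is also the identity since reversing a one-digit string does nothing. So the base case holds.

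For the inductive step I would analyze one application of $F_{id}$-unbundling at the root followed by the recursive $F_{id}$-unbundling strategy in the $k$ subtrees. By the definition of unbundling with $m = k^{\ell-1}$ groups: we split the chips $1, \dots, k^{\ell}$ into $k$ groups where group $j$ (for $0 \le j \le k-1$) consists of the chips $\{i : i \equiv j+1 \pmod k\}$ — equivalently, chip $i$ is sent to child $j+1$ where $j+1$ is determined by the residue of $i-1$ modulo $k$, i.e., by the \emph{least significant} base-$k$ digit of $i-1$. After this step, the $(j{+}1)$st child of the root holds the chips $j+1, j+1+k, j+1+2k, \dots$, and within that subtree the chips, in increasing order, are exactly the $k^{\ell-1}$ chips $\{j + 1 + tk : 0 \le t < k^{\ell-1}\}$; their relative order is order-isomorphic to $1, 2, \dots, k^{\ell-1}$ with the $t$th smallest being $j+1+tk$. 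The key observation is that the base-$k$ representation of $(j+1+tk) - 1 = j + tk$ has least significant digit $j$ and the remaining $\ell-1$ higher digits equal to the base-$k$ representation of $t$.

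Then I would apply the induction hypothesis: within the subtree rooted at child $j+1$, running $F_{id}$-unbundling produces the stable configuration $Z_k(\ell-1) = R_k(\ell-1)$ applied to the ordered chip set $\{j+1+tk\}$. So the chip that the subtree places in its $p$th position (left to right, among its $k^{\ell-1}$ leaves) is the $t$th smallest chip in the subtree where $t = R'_k(\ell-1)$ sends position-index $p-1$ to $t$ — i.e., $t$ is obtained by reversing the $(\ell-1)$-digit base-$k$ string of $p-1$. Globally, the leaves in layer $\ell+1$ are indexed by a position whose most significant digit selects the child $j$ (since children are ordered left to right) and whose remaining $\ell-1$ digits give the within-subtree position $p-1$. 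Putting it together: the chip at global position with digit string $(j, d_1, \dots, d_{\ell-1})$ is $1 + [\,\text{value of } (d_{\ell-1}, \dots, d_1, j)\,]$, which is precisely $1 + R'_k(\ell)$ evaluated at that position. This is exactly $R_k(\ell)$.

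The main obstacle I expect is bookkeeping: carefully reconciling three different digit-string conventions — the position index of a leaf (read from the root downward, most-significant digit = choice at the root), the label of a chip minus one in base $k$, and the digit-reversal operation — and making sure the "least significant digit gets fixed at the root, the rest get reversed recursively" decomposition of the reversal permutation is stated precisely enough to close the induction. A clean way to avoid slippage is to fix the convention up front that a leaf in layer $\ell+1$ is identified with the string $(a_0, a_1, \dots, a_{\ell-1}) \in \{0,\dots,k-1\}^{\ell}$ where $a_0$ is the root's choice and $a_{\ell-1}$ is the last choice, prove by induction that under $F_{id}$-unbundling this leaf receives chip $1 + \sum_{i=0}^{\ell-1} a_i k^{i}$, and then observe that $\sum a_i k^i$ is exactly the digit reversal of $\sum a_i k^{\ell-1-i}$, the latter being the standard left-to-right reading of the path — which is the usual indexing of leaves. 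Everything else is a routine unwinding of the definitions of unbundling and of $R'_k$.
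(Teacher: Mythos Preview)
Your proposal is correct and follows essentially the same approach as the paper: both arguments rest on the observation that $F_{id}$-unbundling at the root sorts the chips by the least significant base-$k$ digit of (label $-1$), and then each subsequent layer sorts by the next digit, so that the leaf reached via path $(a_0,\dots,a_{\ell-1})$ receives chip $1+\sum a_i k^i$. The paper states this informally in two lines, while you spell it out as an explicit induction with careful indexing conventions; your ``clean way'' at the end is exactly the paper's argument made rigorous.
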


\begin{proof}
    Suppose we have chips labeled $0, 1, 2, \dots, k^{\ell}-1$ at the root. After performing $F_{id}$-unbundling at the root, the $i$th child will receive the chips with labels that end in $i-1$ in base-$k$. Similarly, the $F_{id}$-unbundling at the next layer will sort the chips by the second to last digit. In the end, the stable configuration will correspond to the radix-$k$ digit-reversal permutation. Increasing the chips by 1 concludes the proof.
\end{proof}

\begin{remark}
    The algorithm used to create stable configuration $Z_k(\ell)$ illustrates the generalization of the recursive algorithm from Section 3 of \cite{MR1781453} outputting the digit-reversal permutation. According to that section, performing the bit-reversal permutation on a list can be done recursively first by splitting the list into two halves by ``uninterleaving," which is analogous to unbundling in our context, and applying the procedure to the two halves and then pasting together the resulting lists.
\end{remark}

\section{Number of Inversions}\label{sec:NumberOfInversions}

In this section, we look at the number of inversions that is possible in the stable configuration. Each individual chip-firing preserves the order. This makes it interesting to study when the order reverses. In particular, we are interested in the largest number of inversions possible in the stable configuration.

We now show that this permutation $Z_k(\ell) = R_k(\ell)$ has the largest possible number of inversions. 

\begin{theorem}\label{thm:AlgoInversionProof}
If we start with $k^{\ell}$ labeled chips at the root of a $k$-art tree, the permutation $Z_k(\ell)$ has the maximum possible number of inversions among all permutations corresponding to stable configurations. This number of inversions is
\[\frac{k^{2\ell} -  \ell k^{\ell+1}+(\ell-1)k^{\ell}}{4}.\]
\end{theorem}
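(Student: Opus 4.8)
The plan is to establish the formula in two halves: first compute the number of inversions of $Z_k(\ell)$ exactly, and then show no stable configuration can do better. For the count, I would use the recursive/inflation structure already developed. Recall $Z_k(\ell)$ is obtained by $F_{id}$-unbundling at the root, which sorts the $k^\ell$ chips into $k$ groups of $k^{\ell-1}$ chips by last base-$k$ digit, and then recursing on each of the $k$ subtrees with the analogous strategy. So I would write a recursion $\mathrm{inv}(Z_k(\ell)) = \binom{k}{2}\cdot k^{2(\ell-1)} + k\cdot \mathrm{inv}(Z_k(\ell-1))$: the first term counts inversions between chips landing in different layer-$2$ subtrees (any two of the $k$ blocks of size $k^{\ell-1}$ form a ``crossing'' pair since unbundling reverses block order relative to residues — I should double-check the orientation, but the count of cross-block inverted pairs is $\binom{k}{2}(k^{\ell-1})^2$), and the second term is the $k$ independent sub-permutations. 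Solving this linear recursion with base case $\mathrm{inv}(Z_k(1)) = 0$ should yield the closed form $\frac{k^{2\ell} - \ell k^{\ell+1} + (\ell-1)k^{\ell}}{4}$; I would verify the solved form by plugging into the recursion and checking $\ell = 1$.

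For the upper bound — the main obstacle — I need that every stable configuration has at most this many inversions. The key structural fact is that each firing preserves relative order of chips, so inversions can only be ``created'' by the branching: when a vertex $v$ on layer $t$ fires repeatedly, two chips $a < b$ that both pass through $v$ end up with $a$ weakly to the left of $b$ among $v$'s children, or more precisely, if they go to children $j_a \le j_b$ then no inversion is forced, but if $a$ goes to a later child than $b$... wait — by the firing rule, among any fired $k$-tuple the smaller label goes left, so *within a single firing* order is preserved, but across different firings a large chip fired early can go right while a small chip fired late goes left. The clean way to bound: partition the set of all $\binom{k^\ell}{2}$ pairs of chips by the vertex $v$ at which the two chips' paths diverge (their last common ancestor). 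A pair diverging at a layer-$t$ vertex $v$ contributes an inversion only if it is ``inverted at $v$''; for a fixed $v$ the chips passing through $v$ number $k^{\ell-t+1}$, distributed $k^{\ell-t}$ to each child, and the number of inverted pairs among them that diverge at $v$ is at most the number of inverted pairs in the best possible assignment of $k^{\ell-t+1}$ ordered items into $k$ labeled bins of equal size with no two in the same bin — which is exactly $\binom{k}{2}(k^{\ell-t})^2$, achieved by reverse-sorting by residue. Summing over all layers $t$ and all $\frac{k^{t-1}-1}{k-1}+\dots$ vertices on each layer (there are $k^{t-1}$ vertices on layer $t$) gives $\sum_{t=1}^{\ell} k^{t-1}\binom{k}{2}(k^{\ell-t})^2$, and this sum should collapse to the same closed form; then since $Z_k(\ell)$ attains equality at every vertex, it is optimal.

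The step I expect to be genuinely delicate is justifying the per-vertex bound ``$\binom{k}{2}(k^{\ell-t})^2$ inverted diverging pairs'' rigorously — i.e., that whatever multiset of firing $k$-tuples a vertex uses, the induced assignment of its $k^{\ell-t+1}$ incoming chips to the $k$ children (each getting $k^{\ell-t}$ chips) can be realized, and that among *all* such balanced assignments the reverse-residue one maximizes the count of inverted pairs. The realizability direction is essentially Lemma~\ref{walktocat} (the ballot-walk bijection tells us exactly which balanced assignments are attainable — namely all of them, reading the construction), so I would cite that. For the extremal direction I would argue by an exchange/Hardy–Littlewood-type rearrangement: given any balanced assignment, swapping two chips that are ``out of reverse order'' between two bins never decreases the inverted-pair count, so the reverse-sorted assignment is a maximum; the count there is the number of pairs sitting in distinct bins, $\binom{k^{\ell-t+1}}{2} - k\binom{k^{\ell-t}}{2} = \binom{k}{2}(k^{\ell-t})^2$. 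Finally I would double-check the two closed forms (the direct solve of the recursion and the layer-sum) agree with each other and with small cases such as $k=2,\ell=2$ (giving $(16 - 16 + 4)/4 = 1$, consistent with $Z_2(2)$ being the single attainable non-identity-ish configuration) and $k=2,\ell=3$ (giving $(64 - 48 + 8)/4 = 6$, matching $\mathrm{inv}(1,5,3,7,2,6,4,8) = 6$).
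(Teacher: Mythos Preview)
There is a genuine gap in both halves of your argument, stemming from the same misconception about what unbundling does.

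For the exact count: unbundling does \emph{not} reverse block order. After $F_{id}$-unbundling at the root, child $m$ receives $\{m, m+k, m+2k, \ldots\}$, so the blocks interleave. Between children $m < m'$, a cross-block pair $(m+ik,\, m'+jk)$ is inverted precisely when $i > j$, giving $\binom{k}{2}\binom{k^{\ell-1}}{2}$ cross-block inversions, not $\binom{k}{2}k^{2(\ell-1)}$. Your sanity check should have caught this: for $k=2,\ell=3$ the formula gives $(64-48+16)/4 = 8$ (you wrote $+8$ where $(\ell-1)k^{\ell}=16$), and indeed $\mathrm{inv}(1,5,3,7,2,6,4,8)=8$, not $6$.

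For the upper bound, your per-vertex bound $\binom{k}{2}(k^{\ell-t})^2$ is simply the total number of cross-bin pairs; summing it over all vertices gives $\binom{k^{\ell}}{2}$, the trivial bound. The assignment achieving it (largest chips to the leftmost child) is not realizable, and $Z_k(\ell)$ certainly does not attain it. Lemma~\ref{walktocat} does \emph{not} say all balanced assignments arise from firings; it says the realizable ones are exactly the ballot walks satisfying $x_1 \ge x_2 \ge \cdots \ge x_k$ at every step. That constraint is what yields the sharp bound: the $i$th smallest chip sent to child $m$ can exceed at most $i-1$ of the chips sent to any child $m' > m$ (otherwise, among chips with label at most that value, more went to $m'$ than to $m$, violating the ballot inequality). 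This caps the per-vertex cross-child inversions at $\binom{k}{2}\sum_{i}(i-1) = \binom{k}{2}\binom{k^{\ell-t}}{2}$, and unbundling attains it. This ballot argument is exactly what the paper uses; your rearrangement step over \emph{all} balanced assignments cannot substitute for it, because the unconstrained maximum is strictly larger than the realizable maximum.
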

\begin{proof}
We use induction on $\ell$. When $\ell=0$, all stable configurations are the same; thus, $Z$ supplies the maximum number of inversions. Moreover, $Z_k(0) = 0$, which matches the expression we are trying to prove. We start with the first statement.

Suppose for the sake of induction that for any whole number $\ell$, the configuration $Z_k(\ell-1)$, when given $k^{\ell-1}$ labeled chips at the root, gives us a stable configuration with the maximum possible number of inversions. We now prove the inductive step.

Consider any way of dispersing the $k^{\ell}$ chips on the root vertex to the $k$ children. Let set $S_m$ denote the set of chips going to the $m$th leftmost child. Let $L_m$ be the list of chips in $S_m$ but in increasing order. Let $w$ be the string resulting from concatenating lists $L_1, L_2, ..., L_{k}$ in the given order. We claim that for each $m \in [k]$ and $m' > m$, the $i$th chip in set $L_{m}$ is part of at most $i-1$ inversions in the string $w$ that consist of a chip in $L_m$ and a chip in $L_{m'}$. If this were not the case, we could denote the value of the $i$th chip in set $L_{m}$ as $k_0+1$, and it would follow that there are more chips in $S_{m'} \cap [k_0]$ than there are in $S_m \cap [k_0].$ This cannot happen as each chip in $S_{m'} \cap [k_0]$ was dispersed from the root with a smaller chip sent to the $m$th vertex. Furthermore, we observe that, since $w$ is a concatenation of lists $L_1, L_2, ..., L_{k}$ and each $L_j$ is a list of increasing chips, any inversion in $w$ consists of a chip in $L_m$ and a chip in $L_{m'}$ for some $m, m' \in [k]$ such that $m \neq m'$.

On the other hand, the unbundling strategy at the root ensures that for each $m, m' \in [k]$ and $i \in [k^{\ell-1}]$ such that $m' > m$, the $i$th largest chip in $L_m$ is part of exactly $i-1$ inversions consisting of a chip in $L_m$ and a chip in $L_{m'}.$ To see this, we observe that in this setup for any $j \in [k]$, $S_j = \{j, j+k, j+2k, ..., j+k^{\ell-1}\}$. We obtain that the $i$th largest chip in $L_m$ is $(i-1)k+m.$ We obtain that $(i-1)k+m$ is larger than $m', k+m', ..., (i-2)k + m'$ in $L_m$ which also appear right of $(i-1)k+j$ in $w.$ 

Doing some computation we find that for any $m, m' \in [k]$ such that $m' > m$, the total number of inversions involving a chip in $L_m$ and one in $L_{m'}$ is $\sum_{i=1}^{k^{\ell-1}} (i-1)  = \frac{(k^{\ell}-1)k^{\ell-1}}{2}$. Now multiply that by the $\binom{k }{2}$, the number of pairs $m, m' \in [k]$ such that $m' > m$, we obtain that $w$ has
\[\binom{k }{ 2} \frac{(k^{\ell}-1)k^{\ell-1}}{2} = \frac{k^{2\ell} - k^{2\ell-1} - k^{\ell+1}+k^{\ell}}{4}\]
inversions.

By the inductive hypothesis, we know that when the firing procedure above, corresponding to the stable configuration $Z_{k-1}(\ell)$, gets applied to the tree rooted at the $m$th child from the left, we will get a stable configuration that has the largest number of inversions when viewed as a permutation of $L_m$ with $[k^{\ell-1}]$ chips. By the inductive hypothesis, each subtree generates 
\[\frac{k^{2\ell-2} -  (\ell -1) k^{\ell} + (\ell-2)k^{\ell-1}}{4}\]
inversions.
Thus, the total maximum number of inversions is 
\[k\cdot \frac{k^{2\ell-2} -  (\ell -1) k^{\ell} + (\ell-2)k^{\ell-1}}{4} + \frac{k^{2\ell} - k^{2\ell-1} - k^{\ell+1}+k^{\ell}}{4}.\]
After collecting the like terms, we get the desired result.
\end{proof}

\begin{example}
In Example~\ref{ex:unbundling} we saw that $Z_2(3)$ permutations is 15372648. This permutation has $4^{3-1}-(3+1)2^{3-2} = 8$ inversions. 
\end{example}

\begin{remark}
    When $k=2$ (i.e., we construct $Z$  from $2^{\ell}$ chips at the root of a binary tree) we obtain that the maximum number of inversions is $4^{\ell-1}-(\ell+1)2^{\ell-2}$, which is the same sequence as A100575 in OEIS \cite{oeis}, which describes half of the number of permutations of $1, 2, ..., n+1$ with two maxima and starts as: 0, 1, 8, 44, 208, 912, 3840, and 15808.
\end{remark}

\section{The Longest Decreasing Subsequence in \texorpdfstring{$Z_k(\ell)$}{Zkell}}\label{sec:LongestDecreasingSubseq}

As each individual firing preserves the order, it is less surprising to get long increasing subsequences in the stable configuration than long decreasing ones. Now, we study the longest decreasing subsequence in $Z_k(\ell).$ For this chapter we will use $Z_k'(\ell)$, the permutation $Z_k(\ell)$ but with $1$ subtracted from each term. In other words, since $Z_k(\ell) = R_k(\ell)$, we have $Z_k'(\ell) = R_k'(\ell).$

We start by discussing palindromic subsequences, which will be useful later. Consider a sequence of $k$-ary strings, each consisting of $\ell$ digits. We call such sequence \textit{palindromic} if the $i$th term from the beginning is the reversal of the $i$th term from the end.

\begin{lemma}
\label{lem:palindromic}
    Consider a palindromic sequence of $k$-ary strings each with $\ell$ digits. If the values of terms in this sequence are decreasing, then the values of this sequence form a subsequence of $Z_k'(\ell)$. If there is a subsequence of $Z_k'(\ell)$ that is palindromic when converted to $k$-ary strings, each with $\ell$ digits, then it is decreasing.
\end{lemma}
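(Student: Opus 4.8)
The plan is to reduce both statements to one bookkeeping fact about the digit‑reversal map. For $0\le x<k^{\ell}$ I would write $\overline{x}$ for the integer obtained by reversing the $\ell$‑digit base‑$k$ expansion of $x$; this is an involution on $\{0,1,\dots,k^{\ell}-1\}$, and by the definition of $R'_k(\ell)$ the term of $Z_k'(\ell)=R_k'(\ell)$ sitting in position $p$ is exactly $\overline{p}$ (keeping the positions zero‑indexed). Hence the value $v$ occupies position $\overline v$ of $Z_k'(\ell)$, and since a permutation has distinct entries, a list of values $v_1,\dots,v_m$ is a subsequence of $Z_k'(\ell)$ \emph{in that order} if and only if $\overline{v_1}<\overline{v_2}<\cdots<\overline{v_m}$. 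Call this observation $(\star)$.

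Second, I would translate the palindromic hypothesis into the same language: writing $v_i=\mathrm{val}(s_i)$ for $\ell$‑digit base‑$k$ strings $s_1,\dots,s_m$, reversing a string reverses its value, so the sequence $s_1,\dots,s_m$ is palindromic precisely when $\overline{v_i}=v_{m+1-i}$ for all $i$; call this $(\star\star)$. This also quietly covers odd length, where the central string is forced to be a palindrome and the condition is self‑consistent.

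With $(\star)$ and $(\star\star)$ in hand both halves are immediate. For the first statement I would assume $v_1>v_2>\cdots>v_m$ together with palindromicity; then $(\star\star)$ turns $(\overline{v_1},\dots,\overline{v_m})$ into $(v_m,v_{m-1},\dots,v_1)$, which is increasing, so $(\star)$ gives that $v_1,\dots,v_m$ occurs (in that order) in $Z_k'(\ell)$. For the second statement I would start from a palindromic subsequence of $Z_k'(\ell)$ with values $v_1,\dots,v_m$: $(\star)$ says $\overline{v_1}<\cdots<\overline{v_m}$, and $(\star\star)$ rewrites this as $v_m<\cdots<v_1$, i.e. the subsequence is decreasing.

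I do not expect a genuine obstacle here: the entire content is that digit reversal acts identically on positions and on values and is an involution, so ``the reversed values increase'' (being a subsequence of $Z_k'(\ell)$) and ``the values match in reverse across the center'' (being palindromic) are the same condition once monotonicity is also supplied. The only points to be careful about are using zero‑based indexing so that $(\star)$ is literally the definition of $R'_k(\ell)$, and recording that all entries of the permutation are distinct, which makes ``subsequence'' unambiguous and guarantees the positions $\overline{v_i}$ are distinct.
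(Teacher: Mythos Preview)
Your proposal is correct and follows essentially the same approach as the paper: both arguments hinge on the observation that applying digit reversal to a palindromic sequence simply returns the sequence in reverse order, combined with the fact that appearing as a subsequence of $Z_k'(\ell)$ is equivalent to the digit-reversals of the values being increasing. Your formalization via the two tagged facts $(\star)$ and $(\star\star)$ is in fact cleaner and more explicit than the paper's prose version, but the underlying idea is identical.
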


\begin{proof}
    Suppose our $k$-ary sequence of strings has decreasing values. If we reverse digits in every term, from the fact that our sequence is palindromic, we get our sequence in reverse order. Thus, digit-reversal makes the values in our sequence increase; thus, from Proposition~\ref{prop:k-itReversedPermutation}, it is a subsequence of $Z_k'(\ell)$. The second statement is proved similarly. As previously observed, the sequence is palindromic; reversing the digits in every term yields the sequence in reversing order. Since the original sequence was in $Z_k'(\ell)$, in which elements are in reflected lexicographic order, the transformed sequence is from greatest to least lexicographic order. Because of this, and since the transformed sequence is the sequence in reversed order, the original sequence is increasing in lexicographic order.
\end{proof}

Lemma~\ref{lem:palindromic} allows us to build decreasing subsequences in $Z_k'(\ell)$. The sequences we consider are subsequences corresponding to stable configurations on $k$-ary trees when we fire $k^\ell$ chips from the root. Given $k$ and $\ell$, we call a sequence of natural numbers \textit{palindromic}, if after representing each number as a $k$-ary string of length $\ell$, we get a palindromic sequence of strings.

\begin{proposition}
\label{prop:FromellToell+2}
    Given a decreasing palindromic subsequence in $Z_k'(\ell)$ of length $d$ with no zero terms, there exists a decreasing, palindromic subsequence in $Z_k'(\ell+2)$ of length $kd+k-1$ with no zero terms.
\end{proposition}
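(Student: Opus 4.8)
The plan is to build the required subsequence of $Z_k'(\ell+2)$ by hand and then invoke Lemma~\ref{lem:palindromic}. Since $Z_k'(\ell+2)=R_k'(\ell+2)$, that lemma says it suffices to produce a \emph{palindromic} sequence of $(\ell+2)$-digit base-$k$ strings whose values are strictly decreasing and never $0$, and whose length is $kd+k-1$; its sequence of values is then automatically a decreasing, palindromic subsequence of $Z_k'(\ell+2)$ with no zero terms.

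Write the given subsequence as $s_1>s_2>\dots>s_d$, where each $s_i$ is an $\ell$-digit base-$k$ string, $\mathrm{rev}(s_i)=s_{d+1-i}$, and (since there are no zero terms) $\mathrm{val}(s_i)\ge 1$ for all $i$; also $\mathrm{val}(s_1)\le k^{\ell}-1$ trivially. For digits $a,b$ and an $\ell$-digit string $u$, let $a\,u\,b$ denote the $(\ell+2)$-digit string obtained by writing $a$, then $u$, then $b$, so that $\mathrm{rev}(a\,u\,b)=b\,\mathrm{rev}(u)\,a$ and $\mathrm{val}(a\,u\,b)=ak^{\ell+1}+k\,\mathrm{val}(u)+b$. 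For $j=1,\dots,k$ define the block
\[
B_j=\bigl((k-j)\,s_1\,(j-1),\ (k-j)\,s_2\,(j-1),\ \dots,\ (k-j)\,s_d\,(j-1)\bigr),
\]
and for $j=1,\dots,k-1$ define the separator $\sigma_j=(k-j)\,0^{\ell}\,j$, where $0^{\ell}$ is the all-zero $\ell$-digit string. Let $T$ be the concatenation $B_1,\sigma_1,B_2,\sigma_2,\dots,\sigma_{k-1},B_k$, which has length $kd+(k-1)$.

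First I would check that $T$ is palindromic. The $p$-th entry of $B_j$ occupies global position $(j-1)(d+1)+p$, which the reflection of $T$ sends to $(k-j)(d+1)+(d+1-p)$, namely the $(d+1-p)$-th entry of $B_{k+1-j}$; and indeed $\mathrm{rev}\bigl((k-j)\,s_p\,(j-1)\bigr)=(j-1)\,s_{d+1-p}\,(k-j)$ is exactly that entry. Likewise $\sigma_j$ occupies position $j(d+1)$, reflected to position $(k-j)(d+1)$, the position of $\sigma_{k-j}$, and $\mathrm{rev}(\sigma_j)=j\,0^{\ell}\,(k-j)=\sigma_{k-j}$; the self-paired cases reduce to the fact that any string of the form $c\,u\,c$ with $u$ a palindromic string is itself a palindrome (using $\mathrm{rev}(s_{(d+1)/2})=s_{(d+1)/2}$ when needed). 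Next I would check that $T$ is strictly decreasing by comparing adjacent entries: inside a block this is just $s_1>\dots>s_d$; from the last entry of $B_j$ to $\sigma_j$ the value drops by $k\,\mathrm{val}(s_d)-1\ge k-1>0$, using $\mathrm{val}(s_d)\ge1$; and from $\sigma_j$ to the first entry of $B_{j+1}$ the value drops by $k^{\ell+1}-k\,\mathrm{val}(s_1)=k\bigl(k^{\ell}-\mathrm{val}(s_1)\bigr)\ge k>0$, using $\mathrm{val}(s_1)\le k^{\ell}-1$. Finally, the smallest value in $T$ is $\mathrm{val}\bigl(0\,s_d\,(k-1)\bigr)=k\,\mathrm{val}(s_d)+k-1\ge 2k-1>0$, so $T$ has no zero terms. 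Applying Lemma~\ref{lem:palindromic} then finishes the proof.

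I expect the only delicate point to be the choice of the separators. Consecutive blocks $B_j$ and $B_{j+1}$ carry the consecutive leading digits $k-j$ and $k-j-1$, so no string can sit strictly between them by virtue of its leading digit alone; the device is to give $\sigma_j$ the same leading digit $k-j$ as $B_j$ (which automatically places it above all of $B_{j+1}$) together with the minimal body $0^{\ell}$ (which drops it below every entry of $B_j$ — this is exactly where the nonzero hypothesis on the $s_i$ enters), while choosing its trailing digit $j$ so that $\mathrm{rev}(\sigma_j)$ is forced to be the mirror separator $\sigma_{k-j}$. Making the strict-monotonicity inequalities and the reversal pairing hold at the same time is the crux; everything else is bookkeeping on base-$k$ expansions, and the length $kd+(k-1)$ is immediate from the block/separator structure.
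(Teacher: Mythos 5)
Your construction is correct and is essentially the paper's own: the same blocks $(k-j)\,s_i\,(j-1)$ separated by the same all-zero-middle strings $(k-j)\,0^{\ell}\,j$, verified via Lemma~\ref{lem:palindromic}. Your write-up is in fact more careful about the monotonicity inequalities at the block/separator junctions (and matches the paper's stated separators, though not its worked example, which uses a different but equally valid middle term).
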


\begin{proof}
    We prove this by construction. Suppose we have a decreasing palindromic subsequence $(b_1, b_2, ..., b_d)$  of $Z_k(\ell)'$ of length $d$ whose terms, when written as length-$\ell$ $k$-ary strings have not all the same digits. We build a new sequence $b_1', b_2', b_3', \dots, b_{kd+k-1}'$ of $k$-ary strings representing nonnegative integers in the following manner.

    First for $i \in \{1, 2, ..., d\}$ we set $b'_i = (k-1)b_i0$, i.e.\ the result of prepending a $k-1$ and appending a 0 to the string representing $b_i$. We then set $b'_{d+1}$ to be the string with $k-1$ as a prefix, then $\ell$ consecutive zeros, and then $1$ as a suffix. Then, for $i \in \{d+2, d+3, \ldots, 2d+1\}$ set $b'_i = (k-2) b_{i - d-1} 1$, followed by a string that starts with $(k-2)$ then all zeros, then 1. We continue in this manner, for each $j \in \{k-1, k-2, ..., 1\}$ building groups of $k$-ary strings so that each starts with $j$ and ends with $k-j-1$.  With the exception of the case $j=k-1$, we have $d+1$ elements in the group; the first $d$ elements in the group have $b_i$s in the middle, and the last element in the group is the string starting with $j+1$, has all zeros in the middle, and ends with $k-j-2.$ When $j= 0$, there are only $d$ elements in the group: $0b_1(k-1), 0b_2(k-1), 0b_3(k-1),..., 0b_d(k-1)$. 
    
    We observe that the new sequence is palindromic. Consider $b'_{(d+1)(j-1)+j'}$ which by our construction is the $j'$th element in the $j$th group. We want to show that $b'_{(d+1)(j-1)+j'} = b'_{kd+k - (d+1)(j-1) - j'}$. First, we address the case, $j' \neq d+1$. This means $b_{(d+1)(j-1)+j'}' = (k-j)b_{j'}(j-1)$. Recalling that $b_1, b_2, ..., b_{j'}$ is palindromic, we obtain that the reversal of digits in $b_{j'}$ is $b_{d-j'}$ and hence the reversal of digits in $b_i'$ is $(j-1)b_{d-j'}(k-j).$ This is exactly the $(d-j')$th element of the $(k-j+1)$st group, i.e., $b_{(d+1)(k-j+1) + d- j'}' = b'_{kd+k - (d+1)(j-1) - j'}.$  Now, we address the case $j' = d+1.$ Note that for any $a \in \{1, 2, ..., k-1\}$, we have $b'_{(d+1)(a-1) + d+1} = (k-a) 00...0(a)$, where there are $\ell$ zeros. If we reverse the digits, we obtain $a 00..0(k-a)$ where there are again $\ell$ zeros: $a00...0(k-a) = b'_{(d+1)(k-a-1) + d+1} = b'_{kd + k - ((d+1)(a-1) + d+1)}.$ 
    
    We now want to show that the new sequence is decreasing. Since the sequence $\{b_i\}$ is decreasing so is the sequence $\{ab_ib\}$, where $a$ and $b$ are fixed digits. Thus, each such group is decreasing. We continue each group with the same $a$ and $b$ and all zeros in the middle. This continuation is decreasing, as all elements in $\{b_i\}$ are positive. What is left to show is that the last element of one group is greater than the first element of the next. But this is true as the first digit decreases between the groups. 
    
    By construction, the new sequence $\{b_i'\}$ is palindromic and decreasing and does not contain zero terms. By Lemma~\ref{lem:palindromic}, its values form a subsequence in $Z_k'(\ell+2)$. Its length is $kd+k-1$, which concludes the proof.
\end{proof}

\begin{example}
Consider $Z'_2(2) = R'_2(2) = $ 0, 2, 1, 3. It contains a decreasing subsequence 2, 1. Writing the result in binary and pretending with zeros when necessary, we get the following two strings: 10 and 01. Using the construction described in Proposition~\ref{prop:FromellToell+2}, we get the following sequence of strings: 1100, 1010, 0110, 0101, 0011. They correspond to numbers 12, 10, 6, 5, and 3. They form a decreasing subsequence in $Z'_2(4) = $ 0, 8, 4, 12, 2, 10, 6, 14, 1, 9, 5, 13, 3, 11, 7, 15.
\end{example}

Now we are ready for the theorem giving the exact length of the longest decreasing subsequence in $Z_k(\ell).$

\begin{theorem}\label{thm:ZLongestDecreasingSubsequence}
        For even $\ell \geq 1$, the longest decreasing subsequence in $Z_k(\ell)$ has length $(k+1) k^{\ell/2-1} - 1$. For odd $\ell \geq 1$, the longest decreasing subsequence has length $2 k^{(\ell-1)/2}-1$.
\end{theorem}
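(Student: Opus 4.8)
The proof naturally splits into a lower bound and a matching upper bound, and the lower bound is handled almost entirely by Proposition~\ref{prop:FromellToell+2}. For the lower bound I would induct on $\ell$ in steps of $2$, using the proposition to promote a decreasing palindromic subsequence of length $d$ in $Z_k'(\ell)$ with no zero terms to one of length $kd+k-1$ in $Z_k'(\ell+2)$. The base cases are $\ell=1$ and $\ell=2$: for $\ell=1$, $Z_k'(1)$ is the identity on $0,1,\dots,k-1$, so the only decreasing palindromic subsequence with no zero term has length $1$ (just the single string that is its own reversal can be any singleton; $k-1$ works), giving $2k^0-1=1$; for $\ell=2$, one exhibits by hand a decreasing palindromic subsequence of length $k+1$ — namely the strings $(k-1)1, (k-1)0, \dots$ wait, more carefully the $\ell=2$ palindromic strings are $aa$ for $a\in\{0,\dots,k-1\}$, and one takes the decreasing run $(k-1)(k-1), (k-2)(k-2), \dots, 1\cdot 1$ of length $k-1$ with no zero term; then Proposition~\ref{prop:FromellToell+2} bootstraps length $d$ at even level $\ell$ to $kd+k-1$ at level $\ell+2$, and solving the recurrence $d_{\ell+2}=kd_\ell+k-1$ with $d_2=k-1$ gives $d_\ell=(k+1)k^{\ell/2-1}-1$; similarly $d_1=1$ bootstraps to $d_\ell = 2k^{(\ell-1)/2}-1$ in the odd case. (I should double-check the $\ell=2$ base case value: the claimed formula is $(k+1)k^{0}-1=k$, so I need a decreasing palindromic subsequence of length exactly $k$ with no zero terms in $Z_k'(2)$ — this forces including a string with a $0$ digit or using length-$2$ strings more cleverly; the honest base case is likely $\ell=2$ handled directly, or $\ell=0$ with $d_0=1$ feeding the even recurrence to give $d_2 = k\cdot 1 + k-1 = 2k-1$, which overshoots, so the "no zero terms" bookkeeping in the base case is where I must be careful.)

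\textbf{The upper bound} is the substantive half. Here I would argue that any decreasing subsequence of $Z_k'(\ell)$ — which is the radix-$k$ digit-reversal permutation — corresponds, after reversing digits, to an increasing subsequence in ordinary (lexicographic) order, but the underlying \emph{values} are decreasing. So I want: among a set of length-$\ell$ $k$-ary strings $s_1,\dots,s_d$ whose digit-reversals $\bar s_1 < \bar s_2 < \cdots < \bar s_d$ are lexicographically increasing, how long can $s_1 > s_2 > \cdots > s_d$ (as integers) be? The key structural observation is a recursive decomposition by the first digit (equivalently, the last digit of the reversals): grouping the $s_i$ by their leading digit $a\in\{0,\dots,k-1\}$, the value-decreasing condition forces the groups to appear in order of \emph{decreasing} leading digit, while within a fixed leading digit the problem reduces to the same question with $\ell-1$ (stripping the first digit off $s_i$, equivalently the last digit off $\bar s_i$). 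Tracking simultaneously the constraint on the reversals — whose last digits are now the stripped leading digits, hence must be weakly compatible with lexicographic increase — I expect to get a recurrence of the form $f(\ell) \le k\cdot f(\ell-2) + (k-1)$ once both a first-digit and a last-digit peel are combined, matched exactly by the lower-bound construction.

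\textbf{Main obstacle.} The hard part will be making the upper-bound recursion airtight: the digit-reversal permutation does not decompose cleanly under a single-digit peel because peeling the \emph{first} digit of the values is peeling the \emph{last} digit of the reversed strings, and the two constraints (values strictly decreasing, reversals lexicographically ordered) interact across the peel rather than decoupling. I anticipate needing to peel first-and-last digits together — i.e., descend two levels at once, from $\ell$ to $\ell-2$ — classifying a decreasing subsequence by the pair (first digit, last digit) of its terms, showing the blocks are forced into a lexicographic-type order on these pairs that permits at most $k$ "full" blocks plus $k-1$ transition elements, and that each block restricted to the middle $\ell-2$ digits is again a decreasing subsequence of $Z_k'(\ell-2)$. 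The bookkeeping for why exactly $k-1$ extra elements are allowed (and not $k$ or fewer), and the parity split between even and odd $\ell$, will need care; I would prove it by setting up $g(\ell)$ = max length and showing $g(\ell)\le k\,g(\ell-2)+k-1$ together with $g(0)=1$, $g(1)=1$, then observing the solution matches the lower bound exactly, so both inequalities are tight and the theorem follows. A secondary subtlety is the role of zero terms: the "$-1$" in the formula ultimately comes from the fact that one cannot have a genuinely longer decreasing run because the all-zero string (value $0$) can appear at most once and at the end, which trims the count; I would handle this by carrying the "no zero terms" hypothesis through the induction exactly as Proposition~\ref{prop:FromellToell+2} does, and accounting for the single possible zero term separately at the top level.
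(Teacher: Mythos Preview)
Your overall architecture matches the paper's: a lower bound via Proposition~\ref{prop:FromellToell+2} feeding the recursion $d\mapsto kd+k-1$, a matching upper bound via a two-digit peel giving $g(\ell+2)\le k\,g(\ell)+k-1$, and base cases $\ell=1,2$. Two corrections will straighten out the places you flagged as uncertain.

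First, your $\ell=2$ confusion comes from misreading ``palindromic.'' In the paper's sense it is the \emph{sequence} that is palindromic --- the $i$th term from the start is the digit-reversal of the $i$th term from the end --- not each individual string. So for $\ell=2$ the right base object is the length-$k$ sequence $(k{-}1)0,\,(k{-}2)1,\,\ldots,\,1(k{-}2),\,0(k{-}1)$, which is palindromic, strictly decreasing in value, and has no zero \emph{terms} (zero digits are fine). This gives $d_2=k$ and the recursion then produces $(k+1)k^{\ell/2-1}-1$ exactly; you cannot start from $\ell=0$ because $Z_k'(0)=\{0\}$ has no nonzero term to seed Proposition~\ref{prop:FromellToell+2}.

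Second, for the upper bound the paper does \emph{not} classify by the pair (first digit, last digit) --- that route, as you suspected, makes the block count balloon and the bound degrade. Instead it groups only by the first digit $j$ of the value. Within the block with first digit $j$, positions increasing forces the last digits $a$ to be non-decreasing, and together with values decreasing this forces the middle parts $m$ to be strictly decreasing; the paper then argues that (after possibly discarding a single trailing $m=0$) these middles sit inside $Z_k'(\ell)$ and hence the block has length at most $d+1$ for $j>0$ and at most $d$ for $j=0$, giving $(k-1)(d+1)+d=kd+k-1$. So your ``peel first and last digits simultaneously'' instinct is right, but the grouping is one-sided: group by the first digit and then use the last-digit monotonicity as a constraint \emph{within} each group, rather than refining the partition by it. Your identification of this step as the delicate one is accurate; the subtlety is exactly in justifying why the stripped middles still lie in $Z_k'(\ell)$ in the right order.
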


\begin{proof}
Suppose the longest decreasing subsequence in $Z_k(\ell)$ has length $d$. We show that the longest decreasing subsequence in $Z_k(\ell+2)$ has a length not more than $kd + k-1$. We prove this for the shifted case $Z'$, which is equivalent.

Consider the longest decreasing subsequence in $Z_k'(\ell+2)$. We represent each term as a $k$-ary string of length $\ell+2$. Now divide the subsequence into $k$ subsequences that each start with the same digit. Consider a substring that starts with $j >0$. As this substring is a part of $Z'$, which is order reversal, the last digits are in non-decreasing order. If our subsequence is decreasing, it is still decreasing after removing the first and the last digits. Therefore, its length is not more than $d+1$. This is because the last term might be zero, and the rest is a positive decreasing subsequence with respect to digit reversal. Thus, the rest has to belong to $Z_k({\ell})$ and cannot be longer than $d$. By similar reasoning, the last subsequence that starts with 0 has a length not more than $d$. Summing up, we get the result. We see that the longest substring follows the same recursion as in our construction in Proposition~\ref{prop:FromellToell+2}.

Now, we derive the formula by induction. We start with $\ell$ odd. For the base case, we observe that for $\ell = 1$, the length of the longest decreasing subsequence in $Z_k(1)$ is 1. Our formula gives us $2 k^{(1 - 1)/2}- 1 =1$: the same value. Thus, we have a base for induction.

Assume the theorem is true for odd $\ell = i$ that is, the length of the longest decreasing subsequence in $Z_k'(i)$ is $2 k^{(i-1)/2}-1$. Then, by the recursion, we can build the corresponding decreasing sequence in $Z_k'({i+2})$ of length $k (2 k^{(i-1)/2}-1) + k -1 =  2 k^{(i+1)/2} - 1$, concluding the induction.

Now, we move to even numbers. Suppose $\ell = 2$, then the corresponding $Z_k'(\ell)$ contains a decreasing subsequence $(k-1)0$, $(k-2)1$, $(k-3)2$, $\ldots$, $0(k-1)$. The length of this sequence is $k$. Also, permutation $Z'$ consists of $k$ increasing blocks. Hence, the longest subsequence cannot have a length of more than $k$. Our formula gives us $(k+1) k^{2/2-1} - 1 =k$: the same value. Thus, we have a base for induction.

Assume the theorem is true for even $\ell = i$, that is, there exists a decreasing subsequence in $Z_k'(i)$ of length $(k+1) k^{i/2} - 1$. Then, by the recursion, we can build a decreasing sequence in $Z_k'({i+2})$ of length $k (k+1) (k^{i/2} - 1) + k - 1 = (k+1) (k^{(i+2)/2} - k) + (k-1) = (k+1) k^{(i+2)/2} -1$, concluding the induction.
\end{proof}
\begin{example}
     The lengths of longest decreasing subsequences in $Z_2(\ell)$ starting from $Z_2(1)$ are respectively 
     \[1,\ 2,\ 3,\ 5,\ 7,\ 11,\ 15,\ 23,\ 31,\ 47,\ \dots.\]
     This sequence is A052955 in \cite{oeis} shifted. Also, the sequence of lengths of longest decreasing subsequences in $Z_3(\ell)$ starting from $Z_3(1)$ is 
     $$1,\ 3,\ 5,\ 11,\ 17,\ 35,\ 53,\ 107,\ 161,\ 323,\ \dots $$ which is sequence A060647 in \cite{oeis} shifted. 
\end{example}

\section{Decreasing Sequences in Stable Configurations}\label{sec:LongestDecreasingSequencesGeneral}
In this section, we will upper and lower bound the number of terms in the longest possible decreasing sequence.

We begin with an upper bound on the number of configurations of the longest possible decreasing subsequence of a stable configuration, which almost immediately follows from Theorem \ref{thm:ZLongestDecreasingSubsequence}

\begin{proposition}\label{prop:LowerBoundOnLongestDecreasing}
    Given $k^{\ell}$ labeled chips at the root of a directed $k$-ary tree, the longest possible decreasing subsequence in a resulting stable configuration has a length of at least $(k+1)k^{\ell/2-1}-1$ if $\ell$ is even and $2k^{(\ell-1)/2}-1$ if $\ell$ is odd.
\end{proposition}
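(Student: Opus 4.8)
The plan is to observe that this proposition is an almost immediate corollary of Theorem~\ref{thm:ZLongestDecreasingSubsequence} together with Proposition~\ref{prop:k-itReversedPermutation} and Definition~\ref{def:zStrategy}. The key point is that $Z_k(\ell)$ is itself an attainable stable configuration: by Definition~\ref{def:zStrategy}, the permutation $Z_k(\ell)$ arises from the concrete strategy of performing $F_{id}$-unbundling at the root and at every vertex except those in the last layer, and by Proposition~\ref{prop:k-itReversedPermutation} this equals the digit-reversal permutation $R_k(\ell)$. So there exists at least one stable configuration, namely $Z_k(\ell)$, whose longest decreasing subsequence has the length computed in Theorem~\ref{thm:ZLongestDecreasingSubsequence}.

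Concretely, I would write: First I would recall that $Z_k(\ell)$ is an attainable stable configuration of $k^\ell$ labeled chips at the root (Definition~\ref{def:zStrategy} exhibits the strategy; Proposition~\ref{prop:k-itReversedPermutation} identifies the resulting permutation). Then I would invoke Theorem~\ref{thm:ZLongestDecreasingSubsequence}, which states that the longest decreasing subsequence in $Z_k(\ell)$ has length exactly $(k+1)k^{\ell/2-1}-1$ when $\ell$ is even and $2k^{(\ell-1)/2}-1$ when $\ell$ is odd. Since $Z_k(\ell)$ is one particular stable configuration, the longest possible decreasing subsequence over \emph{all} stable configurations is at least this length. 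This gives the stated lower bound.

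The ``main obstacle'' here is essentially nonexistent — the heavy lifting was done in Section~\ref{sec:LongestDecreasingSubseq}. The only thing to be careful about is the bookkeeping: making sure that the $\ell=0$ or small-$\ell$ edge cases are consistent (the theorem it cites is stated for $\ell\ge 1$, and the proposition should carry the same hypothesis), and making sure the phrasing ``of at least'' is justified — i.e., explicitly noting that an attainable configuration realizing this value exists rather than merely bounding below. One should also double-check that ``subsequence'' in the proposition means the same thing as in Theorem~\ref{thm:ZLongestDecreasingSubsequence} (a decreasing subsequence of the permutation read left to right in layer $\ell+1$), which it does by the conventions of Section~\ref{sec:definitions}.

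So the proof is two sentences: \emph{The permutation $Z_k(\ell)$ corresponds to an attainable stable configuration by Definition~\ref{def:zStrategy} and Proposition~\ref{prop:k-itReversedPermutation}. By Theorem~\ref{thm:ZLongestDecreasingSubsequence}, its longest decreasing subsequence has length $(k+1)k^{\ell/2-1}-1$ for even $\ell$ and $2k^{(\ell-1)/2}-1$ for odd $\ell$, so the longest possible decreasing subsequence over all stable configurations is at least this large.} I would present exactly this, perhaps with one added clause reminding the reader that a longer decreasing subsequence could in principle appear in some \emph{other} stable configuration, which is precisely why this is only a lower bound and motivates the upper bound proved in the remainder of the section.
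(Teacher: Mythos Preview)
Your proposal is correct and matches the paper's own proof almost verbatim: the paper simply invokes Theorem~\ref{thm:ZLongestDecreasingSubsequence} for the length of the longest decreasing subsequence in $Z_k(\ell)$ and notes that $Z_k(\ell)$ is by definition an attainable stable configuration. Your additional explicit citations of Definition~\ref{def:zStrategy} and Proposition~\ref{prop:k-itReversedPermutation} are fine but not strictly necessary, since the attainability of $Z_k(\ell)$ follows already from its definition.
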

\begin{proof}
    By Theorem \ref{thm:ZLongestDecreasingSubsequence}, we know that the longest decreasing sequence in $Z_k(\ell)$ is $(k+1)k^{\ell/2}-1$ if $\ell$ is even and $2k^{(\ell-1)/2}-1$ if $\ell$ is odd. Since $Z_k(\ell)$ is by definition a possible stable configuration of chips resulting from stabilizing $k^{\ell}$ labeled chips initially at the root of a directed $k$-ary tree, the result follows.
\end{proof}

We now prove an upper bound on the number of terms in the longest possible decreasing subsequence of a stable configuration resulting from stabilizing $k^{\ell}$ labeled chips starting at the root of a $k$-ary tree.

Let us denote as $D_k(\ell)$ the length of the longest decreasing permutation that can occur in a stable configuration on a $k$-ary tree when we start with $k^\ell$ chips.

The fractal structure of chip-firing allows us to bound the longest decreasing subsequences if we know the value of $D_k(\ell)$ for small $\ell$.

\begin{proposition}
\label{prop:longestdecrease}
        If we start with $k^\ell$ labeled chips at the root of a $k$-ary tree, then the longest strictly decreasing permutation pattern in the stable configuration is at most of length $D_k(n)k^{\ell - n}$ for $n \leq \ell$. In other words, 
        \[D_k(\ell) \leq D_k(n)k^{\ell - n}.\]
\end{proposition}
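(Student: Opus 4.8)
The plan is to exploit the recursive (fractal) structure of chip-firing on the $k$-ary tree, exactly as in the proofs of Theorems~\ref{thm:AlgoInversionProof} and \ref{thm:ZLongestDecreasingSubsequence}. Fix $n \leq \ell$ and consider an arbitrary strategy on $k^\ell$ chips at the root, leading to some stable configuration $\sigma$ (a permutation of $[k^\ell]$). Since the order of firings at a fixed vertex does not matter, we may assume the strategy first fires every vertex in layers $1$ through $n$ completely before touching any vertex in layer $n+1$ or below. After this initial phase, the chips have been partitioned among the $k^n$ vertices of layer $n+1$, with $k^{\ell-n}$ chips sitting at each such vertex; call the vertices (left to right) $u_1, \dots, u_{k^n}$ and let $T_t$ be the set of $k^{\ell-n}$ chips that landed at $u_t$. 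The subsequent firings act independently on the $k^n$ subtrees rooted at the $u_t$'s, so $\sigma$, read left to right, is the concatenation of $k^n$ blocks $B_1, \dots, B_{k^n}$, where $B_t$ is the stable configuration of the subtree rooted at $u_t$. That subtree is an infinite directed $k$-ary tree started with $k^{\ell-n}$ chips (namely the elements of $T_t$, which are distinct but not necessarily $\{1,\dots,k^{\ell-n}\}$); by the pattern-invariance remark preceding Theorem~\ref{thm:anyperm}, the permutation pattern of $B_t$ is realizable as a stable configuration of $k^{\ell-n}$ standard chips, hence its longest strictly decreasing subsequence has length at most $D_k(\ell-n)$.

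Next I would bound the interaction between blocks. Contract each block $B_t$ to a single token; the relative order of these $k^n$ tokens is governed by which $T_t$ contains the larger chips, and this coarse-grained configuration is itself a stable configuration of the top portion — more precisely, treating each $T_t$ as an indivisible unit recovers exactly the unlabeled/labeled chip-firing on the first $n$ layers with $k^n$ "super-chips," so the pattern induced on the blocks is a stable configuration of $k^n$ chips and thus has longest decreasing subsequence of length at most $D_k(n)$. Now take any strictly decreasing subsequence of $\sigma$ and record, for each block $B_t$, how many of its terms are used; call this count $d_t$. The blocks that are actually touched (those with $d_t \geq 1$) must themselves form a strictly decreasing pattern at the block level — because if block $B_s$ precedes block $B_t$ and both contribute to the decreasing subsequence, then every contributed element of $B_s$ exceeds every contributed element of $B_t$, so the tokens for $B_s$ and $B_t$ are in decreasing order. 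Hence the number of touched blocks is at most $D_k(n)$, and within each touched block we contribute at most $D_k(\ell-n)$ terms, giving total length at most $D_k(n) \cdot D_k(\ell-n)$. Since $D_k(\ell-n) \leq k^{\ell-n}$ trivially (there are only $k^{\ell-n}$ chips in a subtree, so the longest decreasing subsequence there has at most $k^{\ell-n}$ terms), we conclude $D_k(\ell) \leq D_k(n) k^{\ell-n}$.

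The one point that requires care — and which I expect to be the main obstacle to write cleanly rather than a deep difficulty — is the claim that "the tokens for two touched blocks lie in decreasing order." This needs the genuinely structural input that $B_s$ appearing entirely to the left of $B_t$ forces, whenever both are hit by a common decreasing subsequence, that $\min B_s > \max(\text{contributed part of } B_t)$; in fact more is needed if we want to invoke $D_k(n)$ for the block pattern, namely that the block pattern is order-isomorphic to a bona fide stable configuration on $k^n$ chips. I would establish this by observing that the partition $T_1, \dots, T_{k^n}$ arises from firing the first $n$ layers, and that if we relabel each $T_t$ by a single representative while keeping their relative order (e.g., each $T_t \mapsto \min T_t$), the firing history of the top $n$ layers is exactly a legal chip-firing strategy on $k^n$ chips; therefore the sequence $(\min T_1, \dots, \min T_{k^n})$ in the order the blocks appear is a stable configuration of $k^n$ chips, whose longest decreasing subsequence has length $\leq D_k(n)$. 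Combined with the within-block bound $D_k(\ell-n) \leq k^{\ell-n}$, this yields the proposition. (One should also note the degenerate endpoints $n = 0$ and $n = \ell$, where the statement reads $D_k(\ell) \leq D_k(\ell)$ and $D_k(\ell) \leq D_k(\ell)$ respectively, so they hold trivially.)
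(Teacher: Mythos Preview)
Your overall fractal strategy is right, but the argument in your second paragraph attempts to prove the intermediate inequality $D_k(\ell) \le D_k(n)\,D_k(\ell-n)$, and this inequality is \emph{false}. For $k=2$ the paper records $D_2(2)=2$ and $D_2(4)=5$, so taking $\ell=4$, $n=2$ would give $5 \le 2\cdot 2 = 4$. Hence no correct argument can establish that step, and indeed both ingredients you use for it break down. First, the sequence $(\min T_1,\dots,\min T_{k^n})$ is \emph{not} in general order-isomorphic to a stable configuration of $k^n$ chips: with $k=2$, $\ell=3$, $n=2$, firing $(1,2),(3,4),(5,6),(7,8)$ at the root and then $(1,7),(3,5)$ and $(2,4),(6,8)$ at the two children yields $T_1=\{1,3\}$, $T_2=\{5,7\}$, $T_3=\{2,6\}$, $T_4=\{4,8\}$, whose minima give the pattern $1,4,2,3$, which is not among the two stable configurations $1,2,3,4$ and $1,3,2,4$ of four chips. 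Second, even if the block pattern \emph{were} a stable configuration, a decreasing subsequence of $\sigma$ uses arbitrary elements of the touched blocks, not their minima, so the touched blocks need not form a decreasing subsequence of $(\min T_1,\dots,\min T_{k^n})$ at all.

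The paper's proof avoids all of this by cutting at the other layer: take the $k^{\ell-n}$ subtrees rooted on layer $\ell-n+1$ (rather than your $k^n$ subtrees rooted on layer $n+1$). Each such subtree then holds $k^n$ chips, so its contribution to any decreasing subsequence is at most $D_k(n)$; summing over the $k^{\ell-n}$ blocks gives $D_k(\ell)\le D_k(n)\,k^{\ell-n}$ directly, with no need to control which blocks are touched. Equivalently, your first paragraph already proves $D_k(\ell)\le k^{n}\,D_k(\ell-n)$ just by bounding each of your $k^n$ blocks by $D_k(\ell-n)$ and summing; renaming $n\mapsto \ell-n$ then gives the proposition. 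Either way, the second paragraph should be discarded.
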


\begin{proof}
    If we take all subtrees with roots on layer $\ell - n+1$, we end up with $k^{\ell -n}$ subtrees, each containing $k^n$ chips. Each subtree can only have in its stable configuration, at most, a strictly decreasing permutation pattern of length $D_k(n)$. Therefore, we can have, at most, a strictly decreasing permutation pattern of length $D_k(n)k^{\ell - n}$. 
\end{proof}

Thus, calculating $D_k(\ell)$ for small $\ell$ will provide a bound for any $\ell$.

\begin{example}
    We have $D_k(1) = 1$ as the stable configuration is on the second layer, which is in increasing order. We have $D_k(2) \leq k$ as the stable configuration consists of $k$ increasing blocks. On the other hand, $Z_k(2)$ contains a subsequence $k(k-1) + 1,  k(k-2) +2, \dots, k$, which has length $k$. Thus, $D_k(2) = k$.
\end{example}

\begin{example}
    We manually calculated that $D_2(3) = 3$ and $D_2(4) = 5$.
\end{example}

The examples and the Proposition~\ref{prop:longestdecrease} imply the following corollary.

\begin{corollary}
\label{cor:longestbound} For $k \geq 2$, if we start with $k^{\ell}$ chips on the root of a $k$-ary tree, the longest possible decreasing subsequence of the stable configuration is at most $k^{\ell-1}$:
\[D_k(\ell) \leq k^{\ell-1}.\]
In addition, for $\ell \geq 4$, if we start with $2^{\ell}$ chips on the root of a binary tree, the longest possible decreasing subsequence of the stable configuration is at most $5\cdot 2^{\ell-4}$.
\end{corollary}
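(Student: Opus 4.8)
The plan is to apply Proposition~\ref{prop:longestdecrease} with well-chosen small values of $n$, using the hand-computed base cases. First I would establish the general bound $D_k(\ell) \leq k^{\ell-1}$: take $n = 1$ in Proposition~\ref{prop:longestdecrease}, so that $D_k(\ell) \leq D_k(1)\,k^{\ell-1} = k^{\ell-1}$, since $D_k(1) = 1$ because the stable configuration of $k$ chips lives entirely on layer $2$ in increasing order. This needs no computation beyond citing the proposition.

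Next I would handle the sharper binary-tree bound for $\ell \geq 4$. The key input is the manually verified value $D_2(4) = 5$ recorded in the example above. Setting $k = 2$ and $n = 4$ in Proposition~\ref{prop:longestdecrease}, which is legitimate precisely because $\ell \geq 4 = n$, gives
\[
D_2(\ell) \leq D_2(4)\, 2^{\ell - 4} = 5 \cdot 2^{\ell-4},
\]
as desired. I should note in passing that this is genuinely stronger than the generic bound $2^{\ell-1}$, since $5 \cdot 2^{\ell-4} = \tfrac{5}{8} \cdot 2^{\ell-1} < 2^{\ell-1}$.

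There is essentially no obstacle here: the corollary is a bookkeeping consequence of Proposition~\ref{prop:longestdecrease} once the small cases $D_k(1) = 1$ and $D_2(4) = 5$ are in hand. The only thing to be careful about is the hypothesis $n \leq \ell$ in the proposition, which is why the second statement is restricted to $\ell \geq 4$; for $\ell < 4$ the bound $5 \cdot 2^{\ell-4}$ would not even be an integer and the argument does not apply. If one wanted to be fully self-contained, the genuine work — the verification that $D_2(4) = 5$, i.e.\ that no stable configuration of $16$ chips on a binary tree contains a decreasing subsequence of length $6$ — is deferred to the manual check asserted in the preceding example, and reproving it would require a finite but somewhat tedious case analysis over dispersions at the root combined with the recursive structure; that is the one place where real effort (rather than citation) would be needed.
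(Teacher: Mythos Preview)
Your proposal is correct and matches the paper's proof essentially line for line: apply Proposition~\ref{prop:longestdecrease} with $n=1$ (using $D_k(1)=1$) for the general bound, and with $k=2$, $n=4$ (using $D_2(4)=5$) for the binary-tree refinement. Your additional remarks about the necessity of the hypothesis $\ell \geq 4$ and the sharpness comparison are accurate and go slightly beyond what the paper records, but the core argument is identical.
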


\begin{proof}
By Proposition~\ref{prop:longestdecrease}, plugging in $n=1$ we obtain that $D_k(\ell) \leq D_k(1)k^{\ell-1} = 1 \cdot k^{\ell-1} = k^{\ell-1}.$ Plugging in $k=2$ and $n=4$, we obtain that $D_2(\ell) \leq D_2(4)2^{\ell-4} = 5\cdot 2^{\ell-4}$.
\end{proof}

In sum, Proposition \ref{prop:LowerBoundOnLongestDecreasing} and Corollary \ref{cor:longestbound} tell us that the longest decreasing subsequence in a stable configuration resulting from stabilizing $k^{\ell}$ labeled chips on a $k$-ary directed tree has length polynomial with respects to $k^{\ell}.$

We end this paper with the conjecture.
\begin{conjecture}\label{conj:DirectedkaryDecreasingSubsequence} Consider a directed $k$-ary tree with $k^{\ell}$ labeled chips initially at the root. The longest decreasing subsequences in a resulting stable configuration do not exceed in length the longest decreasing subsequences of $Z_k(\ell).$
\end{conjecture}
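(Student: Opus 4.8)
Let $L(\ell)$ denote the length of the longest decreasing subsequence of $Z_k(\ell)$, so that Theorem~\ref{thm:ZLongestDecreasingSubsequence} gives $L(\ell)=(k+1)k^{\ell/2-1}-1$ for even $\ell$ and $L(\ell)=2k^{(\ell-1)/2}-1$ for odd $\ell$; in both parities one checks directly that $L(\ell)=k\,L(\ell-2)+(k-1)$ for $\ell\ge 3$, with $L(1)=1$ and $L(2)=k$. These base values coincide with $D_k(1)$ and $D_k(2)$ as computed in Section~\ref{sec:LongestDecreasingSequencesGeneral}, and Proposition~\ref{prop:LowerBoundOnLongestDecreasing} already gives $D_k(\ell)\ge L(\ell)$. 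Thus the conjecture is equivalent to the single inequality
\[
D_k(\ell)\ \le\ k\,D_k(\ell-2)+(k-1)\qquad(\ell\ge 3),
\]
because this, combined with induction on $\ell$ in steps of two anchored at $\ell=1,2$, forces $D_k(\ell)=L(\ell)$ with $Z_k(\ell)$ realizing the maximum. The whole plan therefore reduces to this displayed bound.

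\textbf{The two-layer decomposition.} Fix a stable configuration $C$ of $k^\ell$ chips and a longest decreasing subsequence $\sigma$ in it. Using confluence of the unlabeled dynamics together with the order-independence of firings noted in Section~\ref{sec:labeled}, fire the root to exhaustion and then fire all of layer $2$ to exhaustion; this creates $k^2$ subtrees on layer $3$, indexed by $(m,n)\in[k]\times[k]$ (the $n$th child of the $m$th child of the root), each holding $k^{\ell-2}$ chips, say with chip set $T_{m,n}$, and put $d_{m,n}=|\sigma\cap T_{m,n}|$, so $|\sigma|=\sum_{m,n}d_{m,n}$. Restricting $\sigma$ to the subtree rooted at the $m$th child of the root gives $\sum_n d_{m,n}\le D_k(\ell-1)$, while restricting to a single $T_{m,n}$ gives $d_{m,n}\le D_k(\ell-2)$. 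Since leaves are read in lexicographic order of their addresses, the chips that $\sigma$ selects in a cell $(m,n)$ all exceed those it selects in any later cell; in particular the values selected in row $m$ lie in a window $W_m=(\beta_m,\beta_{m-1}]$, and $W_1,\dots,W_k$ partition $[k^\ell]$ into consecutive blocks stacked from high to low. Finally the composite ``fire two layers'' map obeys a double ballot property: writing $N_{m,n}(t)$ for the number of chips of $T_{m,n}$ not exceeding $t$, one has $N_{m,1}(t)\ge\cdots\ge N_{m,k}(t)$ for every row $m$ and threshold $t$ (each $S_m\cap[t]$ is an initial segment of $S_m$), and $\sum_n N_{m,n}(t)\ge\sum_n N_{m+1,n}(t)$ (the root ballot).

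\textbf{The main obstacle.} What remains is to deduce $\sum_m\bigl(\sum_n d_{m,n}\bigr)\le k\,D_k(\ell-2)+(k-1)$ from the double ballot property and the stacking of $W_1,\dots,W_k$, and this is the step I expect to be hard. The crude bound $\sum_n d_{m,n}\le D_k(\ell-1)$ per row loses roughly a factor of two, so one must use that a row carrying a long decreasing run must span a wide band of values, shrinking the windows left for the remaining rows. My plan is to prove a windowed refinement: let $D_k(\ell;w)$ be the maximum length of a decreasing subsequence confined to a value window meeting the chip set in $w$ elements, over all stable configurations of $k^\ell$ chips, and establish a two-layer recursion for $D_k(\ell;\cdot)$ by the same decomposition, noting that the root ballot bounds the size $|W_m\cap S_m|$ of the window relevant to row $m$ in terms of $\beta_{m-1}$. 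Summing this windowed recursion over the $k$ stacked windows should telescope to $k\,D_k(\ell-2)+(k-1)$, the additive $k-1$ accounting for the at most one ``boundary'' chip that a maximal run may gain at each of the $k-1$ interfaces between consecutive rows --- the same effect visible in the construction of Proposition~\ref{prop:FromellToell+2} and recorded there by the last radix-$k$ digit. Identifying the correct monotone form of $D_k(\ell;w)$ and verifying the telescoping inequality is the crux.

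\textbf{A fallback.} Should the windowed induction prove unwieldy, an alternative is a direct exchange argument: starting from an arbitrary stable configuration, repeatedly reroute the $k$-tuples fired at a chosen vertex toward the $F_{id}$-unbundling pattern of Definition~\ref{def:zStrategy} and verify that no such local move shortens a longest decreasing subsequence. This echoes the spirit of Section~\ref{sec:NumberOfInversions}, although there the recursion was clean enough to need no exchange argument; for the longest decreasing subsequence it is the additive correction $k-1$ that makes the windowed bookkeeping above seem necessary.
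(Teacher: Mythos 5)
This statement is left as an open conjecture in the paper (the authors offer only heuristic evidence and small-case checks), so there is no proof of record to compare against; the question is whether your proposal closes it, and it does not. Your reduction is sound as far as it goes: the identity $L(\ell)=k\,L(\ell-2)+(k-1)$ with $L(1)=1$, $L(2)=k$ does check out against Theorem~\ref{thm:ZLongestDecreasingSubsequence}, Proposition~\ref{prop:LowerBoundOnLongestDecreasing} supplies the matching lower bound, and the two-layer decomposition with its double ballot property is set up correctly (in particular, assuming the root and then all of layer $2$ fire to exhaustion first is legitimate, since the distribution a vertex sends depends only on the set of $k$-tuples it fires, as noted in Section~\ref{sec:labeled}). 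But the entire content of the conjecture is concentrated in the one step you explicitly defer: deducing $\sum_{m,n}d_{m,n}\le k\,D_k(\ell-2)+(k-1)$ from the ballot constraints and the stacked windows. Flagging that step as ``the crux'' does not discharge it.

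The gap is genuine and not a routine verification. The crude per-row bound $\sum_n d_{m,n}\le D_k(\ell-1)$ only yields $k\,D_k(\ell-1)$, which overshoots the target by roughly a factor of $(k+1)/2$, so a real new idea is needed. The windowed quantity $D_k(\ell;w)$ is never defined precisely (maximized over which configurations and which windows?), no recursion for it is proved, and the claimed telescoping is exactly the hard combinatorial assertion: one would need a function $f$ with ``any decreasing subsequence inside one layer-$2$ subtree whose values are confined to a window of $w$ consecutive labels has length at most $f(w)$'' together with $\sum_m f(|W_m|)\le k\,D_k(\ell-2)+k-1$ whenever $\sum_m |W_m|=k^{\ell}$; even exhibiting a candidate $f$ that is both provable and tight enough is open, and a concave $f$ would force you to prove the strong statement $f(k^{\ell-1})\lesssim D_k(\ell-2)$ for equal windows. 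The fallback exchange argument is likewise only a hope: you neither specify the local moves nor verify that rerouting fired $k$-tuples toward the $F_{id}$-unbundling pattern of Definition~\ref{def:zStrategy} never lengthens a decreasing subsequence, and the failure of confluence means such moves alter the stable configuration globally. In short, the proposal is a sensible reduction and research plan, but the conjecture remains unproven.
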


We suspect this conjecture is true since permutations, and hence stable configurations of labeled chips, with long decreasing subsequences, have a large number of inversions and since, by Theorem \ref{thm:AlgoInversionProof}, the permutation $Z_k(\ell)$ has the largest possible number of inversions. In addition, our calculations for $D_k(1)$, $D_k(2)$, $D_2(3)$, and $D_2(4)$ agree with the conjecture.

\section{Acknowledgments} 

Thank you to Professor Alexander Postnikov for suggesting the topic of labeled chip-firing on directed trees and helping formulate the proposal of this research problem. This project started during the Research Science Institute (RSI) program. During RSI, many people helped, and we thank Professors David Jerison and Jonathan Bloom for overseeing the progress of the research problem. We thank Professor Alexander Postnikov for helpful discussions. Our appreciation goes to the RSI students and staff for creating a welcoming working environment.

The first and the second authors are financially supported by the MIT Department of Mathematics. The third author was sponsored by RBC Foundation USA.

\newcommand{\etalchar}[1]{$^{#1}$}

\smallskip

\noindent
Ryota Inagaki \\
\textsc{
Department of Mathematics, Massachusetts Institute of Technology\\
77 Massachusetts Avenue, Building 2, Cambridge, Massachusetts, U.S.A. 02139}\\
\textit{E-mail address: }\texttt{inaga270@mit.edu}
\medskip

\noindent
Tanya Khovanova \\
\textsc{
Department of Mathematics, Massachusetts Institute of Technology\\
77 Massachusetts Avenue, Building 2, Cambridge, MA, U.S.A. 02139}\\
\textit{E-mail address: }\texttt{tanyakh@yahoo.com}
\medskip

\noindent
Austin Luo \\
\textsc{
Morgantown High School,\\
109 Wilson Ave, Morgantown, West Virginia, U.S.A. 26501}\\
\textit{E-mail address: }\texttt{austinluo116@gmail.com}
\medskip

\end{document}